\newcommand{\C}{\mathbb{C}}
\newcommand{\Aut}{{{\text{\rm{Aut}}}}}
\newcommand{\fin}{{\rm f}}
\newcommand{\myuparrow}{\!\uparrow}
\newcommand{\sym}{{\rm Sym}}
\newcommand\myH{\scalebox{0.9}{\bf H}}
\newcommand\myT{\scalebox{0.9}{\bf T}}
\newcommand\myU{\chi_{\scalebox{0.9}{\bf H}}}
\title[Monomial representations in combinatorics]
{Centraliser algebras of monomial representations and applications in combinatorics}
\author[Barrera Acevedo]{Santiago Barrera Acevedo}
\address{
La Trobe University\\
Department of Mathematical and Physical Sciences\\
Bundoora 3083 VIC\\
Australia
}
\email{s.barreraacevedo@latrobe.edu.au}
\author[\'O Cath\'ain]{Padraig \'O Cath\'ain}
\address{
Dublin City University\\
Fiontar agus Scoil na Gaeilge\\
Drumcondra, Dublin 9\\
Ireland
}
\email{padraig.ocathain@dcu.ie}
\author[Dietrich]{Heiko Dietrich}
\address{
Monash University\\
School of Mathematics\\
Clayton 3800 VIC\\
Australia
}
\email{heiko.dietrich@monash.edu}
\author[Egan]{Ronan Egan}
\address{
Dublin City University\\
School of Mathematical Sciences\\
Glasnevin, Dublin 9\\
Ireland
}
\email{ronan.egan@dcu.ie}
\thanks{\'O Cath\'ain acknowledges support from Monash University through the Robert Bartnik Visiting Fellowship; from Technical University of the Shannon; and from the Faculty of Humanities and Social Sciences of Dublin City University.}
\keywords{monomial representation; centraliser algebra; complex Hadamard matrix}
\subjclass{05B20, 20B25}
\begin{document}

\begin{abstract}
Centraliser algebras of monomial representations of finite groups may be constructed and studied
using methods similar to those employed in the study of permutation groups. Guided by results of
D.\ G.\ Higman and others, we give an explicit construction for a basis of the centraliser algebra of a
monomial representation. The character table of this algebra is then constructed via character sums over double cosets. We locate the theory of group-developed and cocyclic-developed Hadamard matrices within
this framework. We apply Gr\"obner bases to produce a new classification of highly symmetric complex Hadamard matrices.

\end{abstract}

\maketitle

\newtheorem{Tma}{Theorem}[section]

\newtheorem{lemma}[Tma]{Lemma}
\newtheorem{prop}[Tma]{Proposition}
\newtheorem{corollary}[Tma]{Corollary}
\newtheorem{theorem}[Tma]{Theorem}

\theoremstyle{definition}
\newtheorem{defi}[Tma]{Definition}
\newtheorem{example}[Tma]{Example}
\newtheorem{remark}[Tma]{Remark}

\section{Introduction}

Let $\Omega$ be a finite set, $k>0$ an integer, and $\mathcal{B}$ a collection of $k$-subsets of $\Omega$. A permutation $g \in \textrm{Sym}(\Omega)$ acts on the set of all $k$-sets in the natural way: $B\subseteq\Omega$ is mapped to $B^{g} = \{ b^{g} \mid b \in B\}$. The {automorphism group} of $\mathcal{B}$ is the largest subgroup $G\leq \textrm{Sym}(\Omega)$ that satisfies $B^g\in\mathcal{B}$ for all $B\in\mathcal{B}$ and $g\in G$.  Taking $k = 2$, one recovers the automorphism group of a graph, while the automorphism groups of block designs and finite geometries arise by imposing suitable conditions on $\mathcal{B}$. The interplay between algebraic properties of the group $G$ and structural properties of the underlying combinatorial structure is one of the classical topics in algebraic combinatorics.  Graphs, designs, and geometries are described by $\{0, 1\}$-matrices, and so it is natural to consider groups of automorphisms that are  permutation groups. Our work here can be seen as  a generalisation of this theory to combinatorial objects defined over larger~alphabets.

To provide a coherent narrative throughout the paper we focus on $n \times n$ (complex) Hadamard matrices. These have complex entries of norm $1$ and satisfy $MM^{\ast} = nI_{n}$ where $M^\ast$ denotes the conjugate-transpose of $M$ and $I_n$ is the $n\times n$ identity matrix. Hadamard matrices find applications in combinatorics, signal processing, and quantum information theory; see e.g.\ Horadam's book \cite{HoradamHadamard}. In this paper, we combine combinatorics, representation theory, and computational algebra to construct new Hadamard matrices in the centraliser algebra of a suitable group representation. 

A matrix is {monomial} if it contains a unique non-zero entry in each row and column. The automorphism group of a 
Hadamard matrix $M$ is defined to be the group $G$ of all pairs of monomial matrices $(P, Q)$ such that $PMQ^{\ast} = M$. Since $M$ is invertible, we see that $P = MQM^{-1}$, so the projections $\pi_{1}(P, Q) = P$ and $\pi_{2}(P, Q) = Q$ are equivalent representations that satisfy $\pi_1(g)M=M\pi_2(g)$ for all $g\in G$, and so $M$ belongs to the \textit{intertwiner} of $\pi_{1}$ and $\pi_{2}$. In the special case that $M$ satisfies $\pi(g)M=M\pi(g)$ for all $g\in G$ and a representation $\pi$, the matrix $M$ belongs to the \textit{centraliser} of $\pi$. While an 
intertwiner carries the structure of a $\mathbb{C}$-vector space, the centraliser is naturally a $\mathbb{C}$-algebra. 
We show how classical results on associative algebras, induced representations, and character theory of finite groups can be combined to give detailed information about a centraliser algebra, see Section \ref{reptheo} for further details. 
In particular, the eigenvalues of a commutative centraliser algebra are expressed as character sums of $G$. Since 
Hadamard matrices are characterised by norm conditions on their entries and eigenvalues, the property of acting 
by monomial automorphisms on a Hadamard matrix can be reduced to the computation of character sums over certain 
double cosets of $G$ and the solution of a system of norm equations, see Sections \ref{sec:cha} and \ref{sec:Apps} for details. 

Our results do in fact generalise to other matrices that are determined by entry and eigenvalue conditions, e.g.\ weighing matrices, equiangular lines, mutually unbiased bases. The restriction to Hadamard matrices illustrates the complexities of the generalisation from $\{0, 1\}$-matrices to general complex entries. Historically, the natural measures of complexity for Hadamard matrices were the size of the matrix and the size of the set of entries: thus matrices with $k^{\textrm{th}}$ roots of unity as entries were studied extensively. In this paper we propose an alternative measure of complexity: the dimension  of a centraliser algebra containing the Hadamard matrix $M$. Our constructions have locally determined entries: by Proposition \ref{CentMat}, a fixed number of entries in each row of $M$ are related to any given entry by explicit scalar factors. Furthermore the eigenvalues of $M$ are expressed as linear combinations of character sums, see Proposition \ref{prop_CT} and the subsequent discussion. While the computations required are too extensive to carry out by hand except for the smallest cases, advances in computational algebra make these methods practical for matrices with several hundred rows.

\subsection{Main Results}
We outline the structure of this work and, at the same time, highlight some main results. In Section \ref{sec_prelim}, we introduce notation and discuss preliminary results. In Section~\ref{secmon}, we provide a description of the centraliser algebra of a monomial representation that allows for effective computations.

Our results in Section~\ref{sec:grp} concern Hadamard matrices invariant under a group of permutations that acts regularly. More precisely, let $G$ be a group of order $n$ and let $f\colon G\to\C$ be a function. Relative to a fixed ordering of the elements of $G$, define the matrix $M_{f}=[f(gh^{-1})]_{g,h\in G}$. Such a matrix is often called \textit{group-developed} or \textit{group-invariant} in the literature. In Theorem \ref{thm:grp} we show that a matrix $M$ is group-developed over a finite group $G$ if and only if there exist monomial matrices $P, Q$ such that $PMQ^{\ast}$ is in the centraliser algebra of the right regular representation of~$G$. Results of this type are well-known, and real Hadamard matrices that are group-developed are equivalent to Menon difference sets, \cite{BJL, PottFiniteGeometry}.  Our main result in Section~\ref{sec:coc} is to locate the \textit{cocyclic Hadamard matrices} within a similar framework. If $\psi \in Z^{2}(G, \mathbb{C}^{\times})$ is a $2$-cocycle then $M_{\psi} = [\psi(g,h)]_{g,h\in G}$ is a (strictly) cocyclic-developed matrix, \cite{HoradamHadamard, deLauneyFlannery}. In Theorem \ref{thmCHM} we show that a matrix is cocyclic-developed over $G$ if and only if the matrix belongs to the centraliser algebra of a monomial representation associated with the central extension of $G$ determined by $\psi$. One sees that group-developed matrices are cocyclic-developed, and correspond to splitting extensions. Group development involves permutation representations and linear combinations of $\{0,1\}$-matrices. Cocyclic development involves monomial matrices, though it can be related to the group-developed case via group cohomology. 

One goal of this paper is to explain how complex Hadamard matrices with suitable symmetry assumptions may be {constructed} in the centraliser algebra of a monomial representation. Section \ref{sec:development} deals with regular representations, while Section \ref{sec:cha} deals with general representations. The eigenvalues of a matrix that is group-developed over an abelian group are expressible in terms of its characters: continuing the notation of the above paragraph, the matrix $M_f$ can be decomposed as {$M_f=\sum_{g\in G} f(g) L_g$ where each $L_g=[\delta_g^{xy^{-1}}]_{x,y\in G}$}; here $\delta$ is  the usual Kronecker delta. With this definition, $\rho\colon g\to L_g$ is the  left regular representation of $G$. Since $G$ is finite, each $L_g$ is diagonalisable over~$\C$; since $G$ is abelian, all the matrices $L_g$ commute, hence they are simultaneously diagonalisable. It follows that $M_f$ lies in the centraliser algebra of $\rho$ and that the eigenvalues of $M_{f}$ are the Fourier coefficients of $f$, that is, $\sum_{g \in G} f(g)\chi(g)$ where $\chi$ is a (linear) character $\chi$ of $G$.  It is known that a complex $n\times n$ matrix is complex Hadamard if and only if all entries have unit norm and all eigenvalues have complex norm $\sqrt{n}$, see Lemma~\ref{lem:chmnorm}. Thus, the matrix $M_{f}$ is complex Hadamard if and only if the function $f\colon G \rightarrow \mathbb{C}$ has all values of norm $1$ and all of its Fourier coefficients are of norm $\sqrt{n}$. As mentioned earlier, this case is considered further in Section~\ref{sec:grp}. If the group $G$ is nonabelian, then in general one no longer obtains the eigenvalues of $M_{f}$ as explicit functions of the characters of $G$. However, if the monomial representation is multiplicity-free, there exist explicit formulae for the eigenvalues of $M_{f}$ in terms of character sums over certain double cosets, see Proposition \ref{prop_CT}.

The construction of complex Hadamard matrices in the centraliser algebra of a monomial group requires several computational steps. First, given a primitive permutation group, we construct all (perfect) monomial groups supported on this group: under suitable restrictions, this is solved by computing the Schur multiplier. Once we have obtained a monomial group, we construct the \emph{character table} of the centraliser algebra using routines that will be described in Section \ref{sec:cha}. Finally, to construct complex Hadamard matrices, we solve a linear system  $T \underline{\alpha} = \underline{\lambda}$, where $T$ is the character table of the centraliser algebra, $\underline{\alpha}$ is a vector of unknowns (essentially the entries of the resulting matrix) which must have norm~$1$, and $\underline{\lambda}$ is a vector of unknowns (the eigenvalues of the matrix) which must have norm $\sqrt{n}$. In essence, this is a system of quadratic equations. We use Gr\"obner basis routines to construct all solutions. Pursued systematically, this allows us to classify all Hadamard matrices under appropriate symmetry assumptions; we discuss the details in Section \ref{sec:Apps}. To illustrate this approach, the paper concludes in Section~\ref{Sec:comp} with computer constructions of  complex Hadamard matrices that are invariant under a monomial cover of a primitive group of degree at most $15$ and rank $3$. 

{\bf Related work.} Centraliser algebras of induced representations are a well-studied topic in group theory, and for background information we refer particularly to work of Higman \cite{HigmanI,HigmanII, HigmanIII,HigmanIV}, M\"uller \cite{MullerSporadic}, and the textbook of Curtis and Reiner \cite{CurtisReiner}. The theory of cocyclic development has been extensively surveyed; see the monographs of Horadam and de Launey and Flannery, \cite{HoradamHadamard, deLauneyFlannery}. A cohomological approach to some of the results in this paper has been developed independently by Goldberger and collaborators, see \cite{Assaf1,Assaf2, Assaf2024}.

\section{Preliminaries}\label{sec_prelim}
\noindent Unless mentioned otherwise, all groups are finite. 

\subsection{Group actions}\label{Sec:GA}
We refer to \cite{Dixon&Mortimer} for background reading on this section and recall notation here. Let $\Omega$ be a finite set and denote by $\text{Sym}(\Omega)$ the symmetric group on $\Omega$.
A group $G$ acts on $\Omega$ if there is a homomorphism $\pi\colon G\to\text{Sym}(\Omega)$; in this case $\pi$ is called a \textit{permutation representation} of $G$, and its image is a \textit{permutation group on $\Omega$}. We denote the image of $\omega\in\Omega$ under $\pi(g)$ by $\omega.g=\omega.\pi(g)$. The stabiliser  of  $\omega\in \Omega$ is  $G_\omega=\{g\in G: \omega.g=\omega\}$, and the $G$-orbit of $\omega\in \Omega$ is $\omega.G=\{\omega.g:g\in G\}$.

The action is \textit{faithful} if $\ker\pi$ is trivial. The $G$-orbits form a partition of $\Omega$. If $\Omega$ forms a single $G$-orbit then the action is \textit{transitive}; it is  $n$-transitive if the induced action on $n$-tuples over $\Omega$ with pairwise distinct entries is transitive. The action is \textit{semiregular} (or \emph{free}) if every stabiliser is trivial; and \textit{regular} if it is semiregular and transitive. Permutation actions of $G$ on sets $\Omega_{1}$ and $\Omega_{2}$ are \textit{equivalent} if there exists a bijection $\phi\colon\Omega_{1} \rightarrow \Omega_{2}$ such that $\phi(\omega).g = \phi(\omega.g)$ for all $\omega \in \Omega_{1}$ and $g\in G$. There is a bijection between equivalence classes of transitive $G$-actions and conjugacy classes of subgroups of $G$. Specifically, a transitive $G$-action on $\Omega$ is equivalent to the $G$-action on right-cosets of a point stabiliser $G_{\omega}$ via right multiplication.

Let $G$ be transitive on $\Omega$, and let $g\in G$ act on $\Omega\times\Omega$ by $(\alpha, \beta).g = (\alpha.g, \beta.g)$. An \emph{orbital} of $G$ is a $G$-orbit on $\Omega\times\Omega$, and the \textit{rank} of $G$ is the number of orbitals. An orbital $\mathcal{O}$ is \emph{self-paired} if $(\alpha,\beta)\in \mathcal{O}$ whenever $(\beta,\alpha)\in \mathcal{O}$. Since $G$ is transitive on $\Omega$, we can fix $\omega\in\Omega$ such that every $(\alpha, \beta)\in\Omega\times\Omega$ can be written as $(\alpha, \beta)=(\omega.g_{1},\omega.g_{2})$ for  suitable $g_{1}, g_{2}\in G$. As a result, there is a bijection between the orbitals of $G$ and the orbits of $G_\omega$ on $\Omega$ given by the map $(\omega, \omega.g).G \mapsto (\omega.g).G_{\omega}$. The $G_\omega$-orbits on $\Omega$ are \textit{suborbits}, and their cardinalities are the \textit{subdegrees} of $G$; the latter are independent of $\omega$. In addition, there is a bijection between the orbitals of $G$ and the $G_\omega$-double cosets of $G$ given by the maps $(\omega.g_1,\omega.g_2).G\mapsto G_\omega g_2g_1^{-1} G_\omega$ and  $G_\omega g G_\omega \mapsto (\omega,\omega.g).G$; for details and proofs see \cite[Chapter 3]{Wielandt}. Let $H=G_\omega$; the orbital of $G$ corresponding to $HgH$ is denoted $\mathcal{O}_{HgH}$ and has elements $(\omega.k,\omega.gk)$ for $k\in G$. For a set $S$, we define  $\delta_{S}(x)$ to be $1$ if $x\in S$, and $0$ otherwise; the Kronecker delta $\delta_x^y$ is $1$ if $x=y$, and $0$ otherwise.

\subsection{$\mathbb{C}$-algebras and representation theory}\label{reptheo}
Let $\text{M}_n(\mathbb{C})$ be the algebra of $n \times n$ matrices over $\mathbb{C}$, and let $A$ be a $\mathbb{C}$--algebra. An $n$-dimensional {\it representation} of $A$ is an algebra homomorphism $\rho\colon A\rightarrow M_n(\mathbb{C})$. The induced $A$-module structure on the $n$--dimensional row  space $\mathbb{C}^n$ is defined by $v.a=v\rho(a)$ for $v\in\mathbb{C}^n$ and $a\in A$. A representation is \textit{reducible} if there exists a nontrivial submodule, and \textit{irreducible} otherwise. If the associated $A$--module decomposes as a direct sum of irreducible submodules, then the representation is {\it completely reducible}. The \textit{character} of $\rho$ is the trace map $\chi_\rho\colon A\rightarrow \mathbb{C}$,  $a\mapsto\text{Tr}(\rho(a))$; it is called irreducible if and only if $\rho$ is irreducible.

By Maschke's Theorem \cite[Theorem 1.9]{IsaacsRep}, the complex \textit{group algebra} $\mathbb{C}[G]$ is completely reducible. If $\rho = \rho_{1}+\ldots+\rho_r$ is the sum of irreducible representations $\rho_1,\ldots,\rho_r$, then each $\rho_i$ is an \emph{irreducible constituent} of $\rho$, and $r$ is the \emph{rank} of $\rho$. A group algebra representation $\rho\colon \mathbb{C}[G] \rightarrow M_{n}(\mathbb{C})$ restricts to a group homomorphism  $G\to \text{GL}_n(\mathbb{C})$ into the group of invertible complex $n\times n$ matrices; this restriction is an $n$-dimensional (complex) representation of $G$. The irreducible submodules of
a representation of $G$ coincide with those of $\mathbb{C}[G]$. The number of nonisomorphic irreducible representations of $G$ is equal to the number of conjugacy classes in $G$. The centraliser algebra $\text{C}(\rho)$ of $\rho$ is the subalgebra of $M_n(\mathbb{C})$  consisting  of all matrices that commute with every element of $\rho(G)$. Schur's Lemma \cite[Lemma 1.5]{IsaacsRep} states that the centraliser of an irreducible representation consists of scalar matrices. A corollary of this is that $\text{C}(\rho)$ is commutative if and only if each irreducible constituent of $\rho$ occurs with multiplicity $1$, see  \cite[Theorem 1.7.8]{sagan}.

\subsection{Monomial matrices}\label{A-mon}
A matrix $M \in \text{M}_n(\mathbb{C})$ is \emph{monomial} if it has exactly one nonzero element per row and column. A monomial matrix with  entries in $\{0,1\}$ is a \emph{permutation matrix}. A matrix $M$ is monomial if and only if $M = PD$ for a permutation matrix $P$ and diagonal matrix $D$. A permutation representation $\pi\colon G\to\text{Sym}(\Omega)$, over a finite set $\Omega$, yields a representation $\rho\colon G\to \text{GL}_n(\mathbb{C})$ of permutation matrices. We also refer to $\rho$ as a permutation representation; it can be extended to a group algebra representation $\rho\colon \mathbb{C}[G]\to M_n(\mathbb{C})$. A representation $\rho$ is \textit{monomial} if each $\rho(g)$ factorises as $\rho(g)=P_gD_g$ for a permutation matrix $P_g$ and diagonal matrix $D_g$. The \textit{associated permutation representation} is defined by $\pi_\rho(g)=P_g$. By abuse of notation, we say a monomial representation has a permutation group property $\mathcal{P}$ (such as transitive, primitive, etc) if the associated permutation representation has it.

The set of $n \times n$ monomial matrices forms a group under matrix multiplication, and  the direct product of this group with itself acts on $\text{M}_n(\mathbb{C})$ via $(P,Q) \cdot R = PRQ^*$. Two matrices are \emph{equivalent} if they lie in the same orbit, and the \emph{automorphism group} $\Aut(R)$ of $R \in \text{M}_n(\mathbb{C})$ is the stabiliser of $R$ under this action. A subgroup $U \leq \Aut(R)$ acts \emph{regularly} (transitively) if the induced actions on rows and columns are regular (transitive). The \emph{strong automorphism group} $\mathrm{SAut}(R)$ is the subgroup of all $(P,P)\in\Aut(R)$.

\subsection{Gr\"obner bases}\label{sec:GBprelim}

By Hilbert's Nullstellensatz \cite[Section 4.1]{clo}, there is a one-to-one correspondence between ideals in a polynomial ring $\mathbb{C}[x_1, x_2, \ldots, x_n]$ and algebraic varieties in $\mathbb{C}^{n}$. This correspondence is fundamental in algebraic geometry, as it allows the translation of geometric questions into questions about sets of polynomials, for which algorithmic methods are often available. Any serious discussion of these topics would take us far afield from the subject of this paper, we refer the reader to standard references such as Shafarevich \cite{Shafarevich} or Cox, Little \& O'Shea \cite{clo}. A Gr\"obner basis for an ideal is defined with respect to an ordering on the monomials of the polynomial ring, and facilitates computation with the ideal. In particular, the irreducible components of the ideal can generally be read from the Gr\"obner basis without difficulty. Methods to compute Gr\"obner bases are provided by standard computational algebra systems, such as GAP \cite{GAP4} and Magma \cite{MAGMA}. We will be exclusively interested in ideals whose elements are polynomials of degree $2$ such that every indeterminate appears with degree at most $1$, corresponding to solutions of linear systems on which norm conditions are imposed. The next example illustrates our use of Gr\"obner bases.

\begin{example} \label{ex:Grobner4}
Let
\[ M = \begin{pmatrix}
\alpha_1 & \alpha_2 & \alpha_3 & \alpha_4 \\
\alpha_2 & \alpha_1 & \alpha_4 & \alpha_3 \\
\alpha_3 & \alpha_4 & \alpha_1 & \alpha_2 \\
\alpha_4 & \alpha_3 & \alpha_2 & \alpha_1
\end{pmatrix} \quad\text{and}\quad T=\left(\begin{array}{rrrr}
1 & 1 & 1 & 1 \\
1 &-1 & 1 &-1 \\
1 & 1 & -1 &-1\\
1 &-1 & -1 & 1 \\
\end{array}\right),
\]
and suppose we want to determine all complex Hadamard matrices of the form $M$ where $\alpha_1,\ldots,\alpha_4$ are complex units. Note that the matrix $M$ is group-invariant under the Klein four-group $C_2^2$. Here we note without proof that $T$ is the character table of the associated centraliser algebra and the eigenvalues $\lambda_1,\ldots,\lambda_4$ of $M$ are given by the linear system $T \underline{\alpha} = \underline{\lambda}$, where $\underline{\alpha} = (\alpha_{1}, \ldots, \alpha_{4})^{\intercal}$ and $\underline{\lambda} = (\lambda_{1}, \ldots, \lambda_{4})^{\intercal}$; the latter is proved in  Section \ref{sec:grp}. Up to Hadamard equivalence, $\alpha_1$ can be set to $1$. Lemma \ref{lem:chmnorm} shows that the set of tuples $(1, \alpha_2, \alpha_3, \alpha_4) \in \mathbb{C}^{4}$ for which $M$ is complex Hadamard is defined by the norm equations $\alpha_i \alpha_i^\ast = 1$ for $i=2,3,4$ and $\lambda_j \lambda_j^\ast = 2$ for $j=1,\ldots,4$. Since complex conjugation is not $\mathbb{C}$-linear, we introduce variables $\alpha_{ic}$ and $\lambda_{jc}$ denoting the complex conjugates of $\alpha_{i}$ and $\lambda_{j}$ respectively. Thus, the polynomials  in  $\mathcal{R}=\mathbb{Q}[\alpha_2, \alpha_{2c}, \alpha_3, \alpha_{3c}, \alpha_4, \alpha_{4c}]$ describing these conditions are
\begin{align*}
P_2 & = \alpha_2 \alpha_{2c}-1 \hspace*{1cm} P_3  = \alpha_3\alpha_{3c}-1 \hspace*{1cm} P_4  = \alpha_4 \alpha_{4c} - 1 \nonumber\\
Q_{1} & = (1 + \alpha_2 + \alpha_3 + \alpha_4)(1 + \alpha_{2c} + \alpha_{3c} + \alpha_{4c}) - 2 \nonumber\\
Q_{2} & = (1 - \alpha_2 + \alpha_3 - \alpha_4)(1 - \alpha_{2c} + \alpha_{3c} - \alpha_{4c}) - 2  \\
Q_{3} & = (1 + \alpha_2 - \alpha_3 - \alpha_4)(1 + \alpha_{2c} - \alpha_{3c} - \alpha_{4c}) - 2 \nonumber \\
Q_{4} & = (1 - \alpha_2 - \alpha_3 + \alpha_4)(1 - \alpha_{2c} - \alpha_{3c} + \alpha_{4c}) - 2 \nonumber
\end{align*}
These polynomials generate an ideal $\mathcal{I}$ of $\mathcal{R}$.  A Gr\"obner basis for this  ideal  consists of a collection of ideals, each describing one irreducible component of the variety of $\mathcal{I}$. In this case, there are $6$ irreducible components, one of them being the ideal $\mathcal{J}$ generated by $\{\alpha_2 - 1, \alpha_{2c} - 1, \alpha_{3} + \alpha_4, \alpha_{3c} + \alpha_{4c}, \alpha_{4}\alpha_{4c} -1\}.$
Geometrically, $\mathcal{J}$ is a circle, in which $\alpha_{4}$ can be any complex unit, $\alpha_{3} = - \alpha_{4}$, and $\alpha_{1} = \alpha_{2} = 1$. Every point on this circle corresponds to a complex Hadamard matrix when substituted into $M$. The remainder of the Gr\"obner basis consists of five similar ideals obtained by freely permuting $\alpha_{2}, \alpha_{3}, \alpha_{4}$.
\end{example}

We conclude the preliminaries with a relating the entries and eigenvalues of a complex Hadamard matrix, which is an immediate consequence of the Hadamard Inequality.

\begin{lemma}\label{lem:chmnorm}
A complex $n\times n$ matrix $M$ is a complex Hadamard matrix if and only if every entry of $M$ has complex norm $1$ and every eigenvalue of $M$ has complex norm $\sqrt{n}$.
\end{lemma}
\begin{proof}
  If $M$ is a complex Hadamard matrix, then its entries have norm $1$ by definition, and it follows from $MM^*=nI_n$ that every eigenvalue of $M$ has norm $\sqrt{n}$. Conversely, suppose $M$ is an $n\times n$ matrix with the stated properties. Since $M$ has $n$ complex eigenvalues of norm $\sqrt{n}$, its determinant meets the Hadamard bound, that is, $|\det(M)|=n^{n/2}$. It follows that  $MM^*$ is positive definite with diagonal entries of norm $n$ and determinant
  $n^{n}$. A fundamental inequality for positive definite matrices $D=[d_{ij}]_{i,j=1}^n$ is that $|\det(D)| \leq \prod_{i=1}^{n} d_{ii}$, with equality if and only if $D$ is diagonal, see \cite[Theorem 1]{MaxDet} and the discussion afterwards. Thus, $MM^*$ is diagonal. Since every entry of $M$ has unit norm, $M$ is a Hadamard matrix.
\end{proof}

\section{Centralisers of monomial representations}\label{secmon}

In this section, we develop ideas of Higman \cite{HigmanI,HigmanIV} to give an explicit construction for a basis of the centraliser algebra of a monomial representation. This is closely related to the transfer homomorphism of finite group theory, see \cite[Chapter 14]{HallGroups}. Most of our results here follow from the existing literature.

In the following, let $G$ be a finite group with subgroup $H\leq G$ of index $n$. Let $T$ be a right transversal to $H$ in $G$, that is, every $g \in G$ admits a factorisation as \[g =h_{g}t_{g}\]
for uniquely determined $h_{g} \in H$ and $t_{g} \in T$. We define the maps $\myH \colon G \rightarrow H$ and $\myT \colon G \rightarrow T$ by $\myH(g) = h_{g}$ and $\myT(g) = t_{g}$. We let $G$ act on $T$ by setting $t\cdot g=\myT(tg)$ for $t\in T$; this defines a group action.

Let $\chi\colon H\to \mathbb{C}^\times$ be a $1$-dimensional representation of $H$ (commonly referred to in the literature as a linear character), and extend $\chi$ from $H$ to $G$ by
\[
\chi^{+}(g) = \begin{cases}
\chi(g) ~~ &\text{if} ~ g \in H,\\
0 ~~&\text{if} ~ g \not\in H.
\end{cases}
\]
We write $\myU(g) = \chi(\myH(g))$ for the $\chi$-value of the $H$-part of $g$; note that $\myU$ is usually not a homomorphism.

The next proposition gives the construction of the representation of $G$ induced from $\chi$. It is known that every $n$-dimensional transitive monomial representation of $G$ is induced from some $1$-dimensional representation of a subgroup $H$ of index $n$, see \cite[Section 43, Exercise 1]{CurtisReiner1}. To write explicit matrices in the induced representation it is convenient to fix an ordering on $T = \{ t_{1}, \ldots, t_{n}\}$ where $t_{1} = 1$ represents the coset $H$. 

\begin{prop}\label{monrep}
With the previous notation, the monomial representation induced from $\chi$ is the $n$-dimensional representation $\rho_\chi=\chi \myuparrow_H^G$ that maps $g\in G$ to the matrix
\begin{eqnarray}\label{eqInd}
\rho_\chi(g)=\left[\chi^{+}(t_{i}gt_{k}^{-1})\right]_{i,k}.
\end{eqnarray}
\end{prop}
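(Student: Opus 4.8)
The plan is to realise $\rho_\chi$ concretely as the matrix of the $G$-action on the standard induced module, written in a basis indexed by the transversal $T$, and then simply to read off its entries. Let $V=\C_\chi$ be the one-dimensional right $\C[H]$-module affording $\chi$, spanned by a vector $v$ with $v\cdot h=\chi(h)v$, and form the induced module $W=V\otimes_{\C[H]}\C[G]$, regarded as a right $\C[G]$-module via right multiplication in the second factor. Because $T$ is a right transversal, $\C[G]=\bigoplus_{k}\C[H]t_k$ as a left $\C[H]$-module, so $\{\,b_k:=v\otimes t_k : 1\le k\le n\,\}$ is a $\C$-basis of $W$. By construction $W\cong\chi\myuparrow_H^G$, so once this basis is fixed it remains only to compute the matrix of each $g$ and match it to \eqref{eqInd}.

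First I would compute $b_i\cdot g=v\otimes t_i g$. Using the factorisation $t_i g=\myH(t_i g)\,\myT(t_i g)$ and moving the $H$-part across the balanced tensor, this becomes $(v\cdot\myH(t_ig))\otimes\myT(t_ig)=\myU(t_ig)\,b_{m}$, where $t_m=\myT(t_ig)$. Thus in row $i$ the matrix $\rho_\chi(g)$ has a single nonzero entry, located in the column $m$ with $t_m=\myT(t_ig)$ and equal to $\myU(t_ig)=\chi(\myH(t_ig))$.

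The key step is then to recognise this description in the closed formula \eqref{eqInd}, and it rests on the elementary coset lemma that, for transversal elements, $t_i g t_k^{-1}\in H$ if and only if $Ht_ig=Ht_k$, i.e.\ if and only if $t_k=\myT(t_ig)$. Uniqueness of $\myT(t_ig)$ gives exactly one nonzero entry per row, and applying the same lemma to $t_kg^{-1}$ gives exactly one per column, so $\rho_\chi$ is monomial with associated permutation action $t\mapsto\myT(tg)$. When $t_k=\myT(t_ig)$ we have $t_igt_k^{-1}=\myH(t_ig)\in H$, so $\chi^{+}(t_igt_k^{-1})=\chi(\myH(t_ig))$ matches the nonzero entry; otherwise $t_igt_k^{-1}\notin H$ and $\chi^{+}(t_igt_k^{-1})=0$ matches the vanishing entries. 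Hence $(\rho_\chi(g))_{ik}=\chi^{+}(t_igt_k^{-1})$ for all $i,k$, which is \eqref{eqInd}.

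The argument is essentially bookkeeping, and I expect the only genuine pitfall to be keeping the left/right conventions consistent: the right-module structure on $W$, the action $t\cdot g=\myT(tg)$ on $T$, and the ordering of indices in \eqref{eqInd} must all be aligned, or the formula emerges transposed. As an alternative that avoids the module construction, one could define $A(g)=[\chi^{+}(t_igt_k^{-1})]_{i,k}$ directly and verify that $A$ is a homomorphism: expanding $(A(g)A(g'))_{ij}=\sum_k\chi^{+}(t_igt_k^{-1})\chi^{+}(t_kg't_j^{-1})$, the same coset lemma collapses the sum to its single surviving term at $t_k=\myT(t_ig)$, and multiplicativity of $\chi$ on $H$ together with $\myH(t_ig)\in H$ yields $\chi^{+}(t_igg't_j^{-1})=A(gg')_{ij}$; one then checks $A(1)=I_n$ and identifies $A$ with $\chi\myuparrow_H^G$ via \cite[Section 43, Exercise 1]{CurtisReiner1}. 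I prefer the module route precisely because that identification is then automatic.
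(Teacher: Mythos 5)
Your argument is correct, but your primary route differs from the paper's. The paper proves the proposition by direct matrix computation: it expands the $(i,k)$ entry of $\rho_\chi(g_1)\rho_\chi(g_2)$ as $\sum_j \chi^+(t_ig_1t_j^{-1})\chi^+(t_jg_2t_k^{-1})$, uses the coset observation to collapse the sum to the single term with $t_j=\myT(t_ig_1)$ and $t_k=\myT(\myT(t_ig_1)g_2)$, and identifies the surviving product with $\chi^+(t_ig_1g_2t_k^{-1})$ --- exactly the ``alternative'' you sketch at the end, with the identification of the resulting homomorphism as $\chi\myuparrow_H^G$ left to the citation of Curtis--Reiner. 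You instead build the induced module $V\otimes_{\C[H]}\C[G]$, take the basis $\{v\otimes t_k\}$, and read the matrix entries off the computation $b_i\cdot g=\myU(t_ig)\,b_m$ with $t_m=\myT(t_ig)$; your handling of the left/right conventions (right $\C[H]$-module, right coset transversal, row-vector action as in the paper's Section 2.2) is consistent, so the matrix comes out untransposed and matches \eqref{eqInd}. The trade-off is clear: your route makes the homomorphism property and the identification with $\chi\myuparrow_H^G$ automatic from the module structure, and exhibits the associated permutation action $t\mapsto\myT(tg)$ directly, at the cost of invoking the tensor-product construction; the paper's route is self-contained matrix bookkeeping that needs no module theory but must separately justify that the matrices \eqref{eqInd} are ``the'' induced representation. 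Both are complete proofs.
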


\begin{proof}
Let $g_1,g_2\in G$. The entry in row $i$ and column $k$ in  $\rho_\chi(g_{1})\rho_\chi(g_{2})$ is $\sum\nolimits_{j=1}^n \chi^+(t_ig_1t_j^{-1})\chi^+(t_jg_2t_k^{-1})$.  It is nonzero if and only if $\myT(t_ig_1)=t_j$ and $\myT(t_jg_2)=t_k$. Thus, there is a unique nonzero entry in row $i$, namely,  $\chi^+(t_ig_1t_j^{-1})\chi^+(t_jg_2t_k^{-1})=\chi(t_ig_1g_2t_k^{-1})$ in column $k$ where $t_k=\myT(\myT(t_ig_1)g_2)$. This coincides with the entry in $\rho_\chi(g_1g_2)$ in row $i$ and column $k$, which shows that  $\rho_\chi$ is a homomorphism defining a monomial representation.
\end{proof}

Recall that $g\in G$ acts on $T\times T$ via $(t,s)\cdot g = (\myT(tg),\myT(sg))$. If $M$ is a matrix whose rows and columns are labelled by the ordered set $T=\{t_1,\ldots,t_n\}$, then we denote by $m(t_i,t_j)$ the entry in $M$ in row $t_i$ and column $t_j$. The next proposition shows that a matrix $M$ lies in the centraliser algebra $\text{C}(\rho)$ if and only if for every $G$-orbital $\mathcal{O}$ the elements in $M$ labelled by $\mathcal{O}$ satisfy certain relations.

\begin{prop}\label{CentMat}
With the previous notation, a matrix $M$ with rows and columns labelled by the transversal $T$, is in the centraliser algebra $\text{C}(\rho)$
if and only if for all $g\in G$ and $t\in T$ we have
\begin{equation}\label{eq_cm1}
 m(\myT(g), \myT(t g))= m(1, t) \myU(g)^{-1} \myU(t g).
\end{equation}
 \end{prop}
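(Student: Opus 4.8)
The plan is to verify the stated equivalence directly by characterising membership in $\mathrm{C}(\rho)$, where $\rho=\rho_\chi$ is the monomial representation of Proposition~\ref{monrep}, through the conjugation condition $\rho(g)\,M\,\rho(g)^{-1}=M$ for all $g\in G$. Since each $\rho(g)$ is monomial with a single nonzero entry $\myU(t_ig)$ in row $t_i$, sitting in column $\myT(t_ig)=t_i\cdot g$, the two internal summations in the triple product collapse to a single surviving term. A short computation, using $\myU\bigl((t_c\cdot g)\,g^{-1}\bigr)=\myU(t_cg)^{-1}$ for the rightmost factor, gives
\[
\bigl[\rho(g)\,M\,\rho(g)^{-1}\bigr]_{t_a,t_c}
=\myU(t_ag)\,\myU(t_cg)^{-1}\,m(t_a\cdot g,\;t_c\cdot g).
\]
Equating this with $m(t_a,t_c)$ shows that $M\in\mathrm{C}(\rho)$ if and only if
\begin{equation*}
m(t_a\cdot g,\;t_c\cdot g)=m(t_a,t_c)\,\myU(t_ag)^{-1}\,\myU(t_cg)\tag{$\dagger$}
\end{equation*}
holds for all $t_a,t_c\in T$ and $g\in G$. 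The forward implication is then immediate: specialising $(\dagger)$ to $t_a=t_1=1$ and relabelling $t_c=t$ recovers \eqref{eq_cm1}, because $1\cdot g=\myT(g)$, $t\cdot g=\myT(tg)$, and $\myU(t_1g)=\myU(g)$.

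For the converse I would bootstrap the hypothesis \eqref{eq_cm1}, which only constrains entries in the first row, up to the full relation $(\dagger)$. Given $t_a,t_c\in T$ and $g\in G$, choose $t'\in T$ with $t'\cdot t_a=t_c$; this is possible because right multiplication by $t_a$ followed by $\myT$ permutes $T$. Applying \eqref{eq_cm1} with group element $t_a$ and transversal element $t'$ yields $m(t_a,t_c)=m(1,t')\,\myU(t_a)^{-1}\,\myU(t't_a)$, while applying it with group element $t_ag$ and the same $t'$ yields $m(t_a\cdot g,\;t_c\cdot g)=m(1,t')\,\myU(t_ag)^{-1}\,\myU(t't_ag)$, where I use the associativity $\myT(\myT(x)g)=\myT(xg)$ of the action to identify the indices as $t_a\cdot g$ and $t_c\cdot g$. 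Solving the first identity for $m(1,t')$ and substituting into the second eliminates the base entry and leaves $(\dagger)$, provided the accumulated $\myU$-factors telescope correctly.

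The main obstacle is precisely this reconciliation of the scalar factors. Two elementary identities for the transversal maps settle it: first, $\myU(t_a)=\chi(\myH(t_a))=\chi(1)=1$ for every $t_a\in T$, which removes the spurious factor in the first identity; and second, the left $H$-equivariance $\myH(hx)=h\,\myH(x)$ for $h\in H$, which on writing $t't_a=\myH(t't_a)\,t_c$ gives the multiplicativity $\myU(t't_ag)=\myU(t't_a)\,\myU(t_cg)$. Combining these shows $\myU(t't_a)^{-1}\,\myU(t't_ag)=\myU(t_cg)$, so the substitution above produces exactly $m(t_a\cdot g,\;t_c\cdot g)=m(t_a,t_c)\,\myU(t_ag)^{-1}\,\myU(t_cg)$. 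Once these two facts about $\myH$, $\myT$ and $\myU$ are recorded, the remaining verification is routine bookkeeping, and the equivalence with \eqref{eq_cm1} follows.
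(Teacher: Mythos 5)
Your proof is correct, and its core is the same as the paper's: you exploit the monomial structure of $\rho(g)$ to collapse the matrix products to a single surviving term per entry, arriving at the relation $m(t_a\cdot g,\,t_c\cdot g)=m(t_a,t_c)\,\myU(t_ag)^{-1}\myU(t_cg)$ for all $t_a,t_c\in T$ and $g\in G$, which is exactly the paper's intermediate equation (its \eqref{auxeq1}); whether one writes this as $\rho(g)M\rho(g)^{-1}=M$ or as $\rho(g)M=M\rho(g)$ is immaterial. Where you genuinely add something is in the converse direction. The paper passes from the general relation to the $i=1$ case and back with only the remark that the general relation ``defines'' the other entries in terms of the first row; it does not actually verify that the first-row condition \eqref{eq_cm1} implies the relation for arbitrary $t_a$. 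Your bootstrapping argument -- choosing $t'$ with $t'\cdot t_a=t_c$, applying \eqref{eq_cm1} twice (with group elements $t_a$ and $t_ag$), and reconciling the scalars via $\myU(t_a)=\chi(1)=1$ and the left $H$-equivariance $\myH(hx)=h\,\myH(x)$, which gives $\myU(t't_a)^{-1}\myU(t't_ag)=\myU(t_cg)$ -- supplies precisely the step the paper leaves implicit, and all the identities you invoke ($\myT(xg)=\myT(\myT(x)g)$, $\myU((t_c\cdot g)g^{-1})=\myU(t_cg)^{-1}$, and the bijectivity of $s\mapsto s\cdot t_a$ on $T$) check out. So the proposal is a complete and slightly more careful version of the paper's argument rather than a different one.
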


\begin{proof}
Since $t_{i}gt_{j}^{-1} \in H$ if and only if $t_{j} = \myT(t_{i}g)$ and  $t_igt_j^{-1}=\myH(t_ig)$, we observe
\[\rho(g)M = \left[\sum\nolimits_{j=1}^n\chi^{+}(t_{i}gt_{j}^{-1})m(t_{j},t_{k}) \right]_{i,k} = \left[\myU(t_{i}g)m(\myT(t_{i}g),t_{k}) \right]_{i,k}.\]
Similarly, $t_{j}gt_{k}^{-1}=h \in H$ if and only if $h^{-1}t_j=t_kg^{-1}$, if and only if $t_{j} = \myT(t_{k} g^{-1})$ and $h=\myH(t_kg^{-1})^{-1}$, and so
\[M\rho(g) = \left[\sum\nolimits_{j=1}^nm(t_{i},t_{j}) \chi^+(t_{j}gt_{k}^{-1}) \right]_{i,k} = \left[m(t_{i},\myT(t_{k} g^{-1})) \myU(t_{k}g^{-1})^{-1} \right]_{i,k}.\]
Now $\rho(g)M = M\rho(g)$ holds for all $g\in G$ if and only if for all $g\in G$ and $i,k\in\{1,\ldots,n\}$ we have
\[
m(\myT(t_ig),t_k) =  m(t_i,\myT(t_k g^{-1}))    \myU(t_ig)^{-1}  \myU(t_kg^{-1})^{-1}.
\]
Let $t\in T$ such that $Ht_k=Ht g$, say $t_k=ht g$. This means that $\myH(t_kg^{-1})=h=\myH(tg)^{-1}$, $\myT(tg)=t_k$, and $\myT(t_kg^{-1})=t$, which shows that  $M\in \text{C}(\rho)$ if and only if for all $g\in G$ and  $i \in\{1,\ldots,n\}$ we have
\begin{equation}\label{auxeq1}
m(\myT(t_{i} g), \myT(tg)) = m(t_{i},t)  \myU(t_{i}g)^{-1}  \myU(t g)
\end{equation}
This defines $m(\myT(t_{i} g), \myT(tg))$ in terms of $m(t_{i}, t)$; for $i=1$, we obtain~\eqref{eq_cm1}.
\end{proof}

Note that \eqref{eq_cm1} is well-defined if and only if $\myU(g)^{-1} \myU(t g)=\myU(k)^{-1} \myU(t k)$ whenever $g,k\in G$ satisfy $(\myT(g),\myT(tg))=(u,v)=(\myT(k),\myT(tk))$, and the latter equation holds if and only if  $g,k\in  Hu \cap t^{-1}Hv$. This is captured by the following definition.

\begin{defi}
  Let $G$ be a group with subgroup $H$. Let $\rho = \rho_{\chi}$ be the monomial representation of $G$ induced from a linear character $\chi$ of $H$. Let $T$ be a set of right coset representatives of $H$. The orbital $\mathcal{O}$ associated with $(1,t)$ is \emph{orientable} if and only if for all $(u,v)\in \mathcal{O}$ and $g,k\in Hu\cap t^{-1}Hv$ we have \begin{align}\label{eq_orient}\myU(g)^{-1} \myU(t g)=\myU(k)^{-1} \myU(t k).\end{align}Otherwise, $\mathcal{O}$ is \emph{non-orientable}.
\end{defi}

It follows  that a matrix $M$ in the centraliser algebra must have zero entries at all positions of non-orientable orbitals or, in other words, $M$ must be supported on orientable orbitals. In particular, two matrices supported on the same orbital are linearly dependent, whereas two orbital matrices corresponding to distinct orbitals are linearly independent. In conclusion, the next result follows.

\begin{theorem}\label{thm:basCentAlg}
With the previous notation, the centraliser algebra $\text{C}(\rho)$ has a $\mathbb{C}$-basis spanned by the orientable orbital matrices.
\end{theorem}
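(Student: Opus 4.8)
The plan is to assemble the theorem from the preceding two structural results—Proposition \ref{CentMat} together with the orientability analysis—and to verify that the orientable orbital matrices form both a spanning set and a linearly independent set of $\text{C}(\rho)$. First I would make precise what an \emph{orbital matrix} is: for each orbital $\mathcal{O}=\mathcal{O}_{HgH}$ of $G$ on $T\times T$, let $M_{\mathcal{O}}$ be the matrix supported exactly on the positions $(u,v)\in\mathcal{O}$, with entries determined by the single free choice $m(1,t)$ (where $(1,t)$ is the canonical representative of $\mathcal{O}$) propagated across the orbital by the rule in \eqref{eq_cm1}, namely $m(\myT(g),\myT(tg)) = m(1,t)\,\myU(g)^{-1}\myU(tg)$. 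The content of the orientability condition \eqref{eq_orient} is exactly that this propagation is \emph{consistent}: the right-hand side depends only on the position $(\myT(g),\myT(tg))$ and not on the particular $g$ chosen to reach it, precisely when $\mathcal{O}$ is orientable. Thus for each orientable orbital we obtain a well-defined nonzero matrix $M_{\mathcal{O}}$, unique up to the scalar $m(1,t)$, while for each non-orientable orbital no nonzero centralising matrix can be supported there.

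The spanning claim then follows from Proposition \ref{CentMat}. Given an arbitrary $M\in\text{C}(\rho)$, the proposition shows that $M$ satisfies \eqref{eq_cm1} for all $g\in G$ and $t\in T$; restricting attention to a single orbital $\mathcal{O}$, the entries of $M$ on $\mathcal{O}$ are completely determined by the one value $m(1,t)$, and on any non-orientable orbital this forces $m(1,t)=0$ and hence all entries on $\mathcal{O}$ to vanish (since the defining relation cannot be satisfied consistently unless the seed entry is zero). Consequently $M = \sum_{\mathcal{O}\text{ orientable}} m_{\mathcal{O}}(1,t_{\mathcal{O}})\,\widetilde{M}_{\mathcal{O}}$, where $\widetilde{M}_{\mathcal{O}}$ is the normalised orbital matrix with seed $1$; this exhibits $M$ as a $\mathbb{C}$-linear combination of orientable orbital matrices.

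For linear independence, the key observation is that distinct orbitals partition $T\times T$ into disjoint position sets: the orbitals are the $G$-orbits on $T\times T$, so their supports are pairwise disjoint and cover all positions. Hence two orbital matrices corresponding to distinct orbitals have disjoint supports and are therefore linearly independent, while any matrix supported on a single orbital is a scalar multiple of the normalised generator. I would make this explicit by noting that if $\sum_{\mathcal{O}} c_{\mathcal{O}}\widetilde{M}_{\mathcal{O}}=0$, then reading off the seed position $(1,t_{\mathcal{O}})$—which lies in $\mathcal{O}$ and no other orbital—forces each $c_{\mathcal{O}}=0$. Combining the spanning and independence arguments shows that the orientable orbital matrices form a $\mathbb{C}$-basis of $\text{C}(\rho)$.

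The main obstacle, and the only step requiring genuine care rather than bookkeeping, is confirming that the disjoint-support structure truly gives independence \emph{and} that the orbital matrices live in $\text{C}(\rho)$ in the first place—that is, checking that an orientable orbital matrix genuinely satisfies the centralising condition \eqref{eq_cm1} for \emph{every} $g$ and $t$, not merely for the representatives used to define its entries. This amounts to verifying a cocycle-type compatibility of the scalars $\myU(g)^{-1}\myU(tg)$ under composition of the $G$-action on $T\times T$; since $\myU$ is not a homomorphism, one must check that iterating the propagation rule around the orbital is path-independent, which is exactly the statement that orientability has already secured. Once that consistency is in hand, the remainder is routine, and the theorem follows immediately from the preceding discussion, as the excerpt's phrase ``the next result follows'' suggests.
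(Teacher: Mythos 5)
Your proposal is correct and follows essentially the same route as the paper: Proposition \ref{CentMat} determines each entry of $M$ from the seed value $m(1,t)$ via the propagation rule, orientability is exactly the consistency of that propagation (forcing $M$ to vanish on non-orientable orbitals), and linear independence comes from the disjoint supports of distinct orbitals. The paper leaves these steps as a short preceding discussion, whereas you spell them out — including the check that each orientable orbital matrix genuinely lies in $\text{C}(\rho)$ — but the argument is the same.
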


\begin{example}
  Let $G=\sym_{n}$ with $n\geq 4$ be the symmetric group of degree $n$, acting on the set of unordered pairs of elements in $\{1, 2, \ldots, n\}$. The point stabiliser $H$ of $\{1,2\}$ is  isomorphic to $\sym_{n-2} \times \sym_{2}$, with  transversal  
  \[ T = \{ 1_{G}, (1,2,i), (2, 1, i), (1,j)(2,k) \mid 3 \leq i \leq n, 3 \leq j < k \leq n \}\,.\] 
  The action has rank~$3$ with orbitals $\mathcal{O}_1=\{\{1,2\}\}$, $\mathcal{O}_2=\{\{1,x\},\{2,x\}\mid  x\ne 1,2\}$, $\mathcal{O}_3=\{\{x,y\}\mid x,y\ne 1,2\}$. Since the commutator subgroup of $H$ has index $4$ in $H$, it follows that  $H$ has three nontrivial linear characters, \cite[(2.23)]{IsaacsRep}. We choose $\chi$ to be the character which has kernel $\sym_{n-2}$, so $\chi(x)$ is nontrivial if and only if the projection onto the direct factor $\sym_{2}$ is nontrivial. We claim that $\mathcal{O}_3$ is non-orientable. To see this, pick $u=1_{G}$ and $t=v=(1,3)(2,4)$, so that  $Hu\cap t^{-1}Hv$ is a subgroup of $\sym_{n}$ isomorphic to $\sym_{2} \times \sym_{2} \times \sym_{n-4}$, fixing $\{1,2\}$ and $\{3,4\}$ setwise. Since $(1,2) \in Hu \cap t^{-1} Hv$, non-orientability is witnessed by
  \[ \myU(u)^{-1} \myU(t u)=1\ne -1= \myU((1,2))^{-1} \myU(t (1,2));\]
indeed, we have   $\myU((1,2))^{-1} \myU(t (1,2))=\chi((1,2))^{-1}\chi((3,4)) = -1$ since $t(1,2)=(1,3,2,4)=(3,4)(1,3)(2,4)$, and   $\myU(u)^{-1} \myU(t u)=1$ holds by definition.
\end{example}

We conclude this section with a convenient test for orientability of orbitals. Given an orbital $\mathcal{O}=(1,t)\cdot G$, the next proposition shows that instead of verifying \eqref{eq_orient} for all $(u,v)\in \mathcal{O}$ and $g,h\in Hu\cap t^{-1}Hv$, it suffices to only consider elements in $H\cap t^{-1}Ht$ as explained below.

\begin{prop}\label{prop:isorbital}
Let $H \leq G$, let $T$ be a right transversal of $H$ in $G$, and let $\chi$ be a linear character of $H$. The orbital $\mathcal{O}$ containing $(1, t)$ is orientable if and only if $\chi(tht^{-1}h^{-1}) = 1$ for all $h \in H \cap t^{-1}Ht$.
\end{prop}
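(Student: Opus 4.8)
The plan is to encode the orientability condition as the constancy of a single scalar-valued function. Set $\varphi(g) = \myU(g)^{-1}\myU(tg)$ for $g \in G$; then, unwinding the definition, the orbital $\mathcal{O}$ containing $(1,t)$ is orientable precisely when $\varphi$ is constant on each set $S_{u,v} := Hu \cap t^{-1}Hv$ as $(u,v)$ ranges over $\mathcal{O}$. The whole argument reduces to understanding the shape of these sets and how $\varphi$ varies across each one.

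First I would identify every $S_{u,v}$ with a coset of $L := H \cap t^{-1}Ht$. Since $(u,v)\in\mathcal{O} = (1,t)\cdot G$, there is some $g$ with $(\myT(g),\myT(tg))=(u,v)$, so $S_{u,v}$ is nonempty; fix such a base point $g_{0}\in S_{u,v}$. Using the equivalences $g\in Hu \Leftrightarrow \myT(g)=u$ and $g \in t^{-1}Hv \Leftrightarrow \myT(tg)=v$, a short calculation shows that $g \in S_{u,v}$ if and only if $gg_{0}^{-1}\in H$ and $t(gg_{0}^{-1})t^{-1}\in H$, that is, if and only if $gg_{0}^{-1}\in L$. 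Hence $S_{u,v}=Lg_{0}$. Crucially, $L$ depends only on $t$ and not on the chosen pair $(u,v)$.

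Next I would compute how $\varphi$ moves along $S_{u,v}$. For $s\in L$ the uniqueness of the factorisation $g=\myH(g)\myT(g)$ gives $\myH(sg_{0})=s\,\myH(g_{0})$, and, writing $tsg_{0}=(tst^{-1})(tg_{0})$ with $tst^{-1}\in H$, also $\myH(tsg_{0})=(tst^{-1})\myH(tg_{0})$. Since $\chi$ is multiplicative on $H$, this yields
\[
\varphi(sg_{0}) = \chi(s)^{-1}\chi(tst^{-1})\,\varphi(g_{0}) = \chi(tst^{-1}s^{-1})\,\varphi(g_{0}).
\]
Therefore $\varphi$ is constant on $S_{u,v}$ if and only if $\chi(tst^{-1}s^{-1})=1$ for every $s\in L$. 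As this criterion does not involve $(u,v)$ or $g_{0}$, the conditions coming from the various orbit points coincide, and orientability of $\mathcal{O}$ is equivalent to $\chi(tht^{-1}h^{-1})=1$ for all $h\in L = H\cap t^{-1}Ht$, as claimed.

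I expect the main obstacle to be purely bookkeeping rather than conceptual: verifying cleanly that $S_{u,v}=Lg_{0}$ (both inclusions, via the $\myT$-characterisations of the two cosets) and that $\myU$ transforms multiplicatively under left multiplication by elements of $H$ and under the conjugation $s\mapsto tst^{-1}$. Once the function $\varphi$ and the subgroup $L$ are in place there is no real difficulty; the key observation is simply that translating the base point by $s\in L$ twists $\varphi$ by the character value of the commutator $tst^{-1}s^{-1}$, which simultaneously collapses the reduction to the pair $(1,t)$ and produces the stated condition.
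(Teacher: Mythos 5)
Your proof is correct and follows essentially the same route as the paper's: both reduce orientability to the condition $\chi(tht^{-1}h^{-1})=1$ on the stabiliser $L=H\cap t^{-1}Ht$ of $(1,t)$, using the unique factorisation $g=\myH(g)\myT(g)$ and the multiplicativity of $\chi$ on $H$. Your parametrisation $S_{u,v}=Lg_{0}$ is a slightly cleaner way to organise the same computation that the paper carries out by comparing two arbitrary elements $g,k\in Hu\cap t^{-1}Hv$ and observing that the products $gk^{-1}$ exhaust the stabiliser of $(1,t)$.
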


\begin{proof}
Note that $H\cap t^{-1}Ht$ is the stabiliser of $(1,t)$ in $G$.  Let $g,k\in G$ such that $(1,t) \cdot g = (1,t)\cdot k = (u,v)$. Then $g = h_{1}u$, $tg = h_{2}v$, $k = h_{3}u$ and $tk = h_{4}v$ for some $h_1,\ldots,h_4\in H$. It follows that $gk^{-1} = h_{1}h_{3}^{-1}$ and $tgk^{-1}t^{-1} = h_{2}h_{4}^{-1}$. Now suppose that $\mathcal{O}$ is orientable. Since $g,k\in Hu\cap t^{-1}Hv$, the orientability assumption implies that ${\myU}(g)^{-1}{\myU}(tg) =  {\myU}(k)^{-1}{\myU}(tk)$. Using that $\chi$ is a homomorphism on $H$, this can be  rephrased as follows:
    \renewcommand*{\arraystretch}{1.3}
    \[\begin{array}{lrcl}
    &\myU(g)^{-1}{\myU}(tg) &=&  {\myU}(k)^{-1}{\myU}(tk) \\
    \iff& {\myU}(h_{1}u)^{-1}{\myU}(h_{2}v) &=&  {\myU}(h_{3}u)^{-1}{\myU}(h_{4}v) \\
    \iff& \chi(h_{2}h_{4}^{-1}) &=&  \chi(h_{1}h_{3}^{-1}) \\
    \iff& \chi(tgk^{-1}t^{-1}) &=&  \chi(gk^{-1}).
    \end{array}\]
    The stabiliser of $(1,t)$ in $G$ is generated by all $gk^{-1}$ where $g,k\in G$ which satisfy $(1,t)\cdot g=(1,t)\cdot k$, see \cite[Theorem 3.6A]{Dixon&Mortimer}. Thus, the last equation in the above equivalences implies that $\chi(tht^{-1})=\chi(h)$ for all $h\in H\cap tHt^{-1}$.

In the other direction, assume that $\chi(h) = \chi(tht^{-1})$ for all $h \in H \cap t^{-1}Ht$. Suppose that $(1, t)\cdot g= (1, t)\cdot k$ and proceed as before: we can write $g = h_{1}u$, $tg = h_{2}v$, $k = h_{3}u$ and $tk = h_{4}v$ for suitable $h_{i} \in H$, and now  the argument follows from the implications above.
\end{proof}

\section{Locating group-developed and cocyclic  matrices in centraliser algebras}\label{sec:development}

The main results of this section are characterisations of group-developed and cocyclic matrices; we recall the definitions below.  In Theorem~\ref{thm:grp} we see that a matrix is group-developed over a finite group $G$ if and only if there exists an equivalent matrix in the centraliser algebra of the right regular representation of $G$. While this result is well-known, the reader is encouraged to compare this with Theorem \ref{thmCHM}, which shows that a matrix is cocyclic over $G$ if and only if there exists an equivalent matrix in the centraliser  algebra of a certain monomial cover of $G$.

Throughout, let $G$ be a finite group and let $\mathcal{A}$ be a finite (hence cyclic) subgroup of~$\mathbb{C}^\times$.

\subsection{Group-development and permutation representations}\label{sec:grp}

All matrices in this section will have rows and columns labelled by the elements of $G$ with respect to some fixed ordering. A matrix $M$ with entries in $\mathcal{A}$ is called \textit{strictly group-developed over $G$} if there exists a map $f\colon G\rightarrow \mathcal{A}$ such that $M=[f(gh)]_{g,h\in G}$, and \textit{strictly group-invariant} if
$M=[f(gh^{-1})]_{g,h\in G}$, see \cite[Definition~2.17]{HoradamHadamard} and \cite[Definition~10.2.1]{deLauneyFlannery}. These definitions differ by a permutation of columns, hence $M$ is strictly group-invariant if it is $\mathcal{A}$-equivalent to a strictly group-developed matrix, and vice versa. It is convenient to define $M$ to be \textit{group-developed} if it is $\mathcal{A}$-equivalent to a strictly group-developed matrix.

The \textit{right regular representation} of $G$ is defined by $R(g) = [\delta_{y}^{xg}]_{x,y\in G}$ for $g\in G$, where $\delta_a^b$ is the usual Kronecker delta.
Similarly, the left regular representation is defined by $L(g) = [\delta_{y}^{g^{-1}x}]_{x,y\in G}$.
A direct calculation confirms $N=[\delta^{x}_{y^{-1}}]_{x,y\in G}$ satisfies  \begin{align}\label{eq_N1}N^2=I_n\quad\text{and}\quad NR(g)N&=L(g)=L(g^{-1})^\intercal\quad(g\in G),
\end{align}
which shows $L(g)=NR(g)N^*=NR(g)N$ for all $g\in G$, so  $R$ and $L$ are conjugate.

\begin{lemma}\label{lemsgd} With the previous notation, a complex  $n\times n$ matrix $M$ is strictly group-developed over $G$ if and only if $R(g)ML(g)^\intercal = M$ for all $g\in G$.
\end{lemma}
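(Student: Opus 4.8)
The plan is to reduce the matrix identity $R(g) M L(g)^\intercal = M$ to a pointwise condition on the entries $m(x,y)$ of $M$, and then to recognise that condition as precisely the statement that $m(x,y)$ depends only on the product $xy$, which is what strict group-development asserts.

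First I would unwind the definitions of $R$ and $L$ to compute the $(x,y)$-entry of the triple product. Since $R(g)_{x,z} = \delta_z^{xg}$, left multiplication by $R(g)$ sends the $(x,y)$-entry of $M$ to $m(xg, y)$; and since $L(g)^\intercal_{z,y} = L(g)_{y,z} = \delta_z^{g^{-1}y}$, right multiplication by $L(g)^\intercal$ sends the $(x,y)$-entry to $m(x, g^{-1}y)$. Combining these,
\[
\big(R(g)\,M\,L(g)^\intercal\big)_{x,y} = m(xg,\, g^{-1}y),
\]
so the asserted matrix identity holds for all $g\in G$ if and only if $m(xg,\,g^{-1}y) = m(x,y)$ for all $x,y,g \in G$.

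For the forward implication, I would suppose $M = [f(xy)]_{x,y}$ is strictly group-developed, so that $m(x,y)=f(xy)$. Then $m(xg,\,g^{-1}y) = f\big((xg)(g^{-1}y)\big) = f(xy) = m(x,y)$, whence the displayed pointwise condition, and therefore the matrix identity, holds. For the converse, I would assume the pointwise condition holds for all $g$ and specialise to $g = y$, which gives $m(xy,\,1) = m(x,y)$ for all $x,y \in G$. Defining $f\colon G \to \mathbb{C}$ by $f(w) = m(w,1)$ then yields $m(x,y) = f(xy)$, exhibiting $M$ as strictly group-developed.

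The only delicate point is the index bookkeeping: keeping the row/column labelling, the transpose, and the left/right actions of the two regular representations straight, so that the composite substitution $(x,y) \mapsto (xg,\,g^{-1}y)$ comes out with the inverse on the correct side. One could alternatively use the relation $L(g)^\intercal = L(g^{-1})$ recorded in \eqref{eq_N1} to streamline this step, but a direct entrywise calculation is cleanest. Once the substitution is pinned down, both implications are immediate.
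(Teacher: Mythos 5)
Your proposal is correct and follows essentially the same route as the paper: both reduce the matrix identity to the pointwise condition $m(xg,g^{-1}y)=m(x,y)$ and then specialise $g$ to extract the developing function (you take $g=y$ and read $f$ off the first column, the paper takes $g=x^{-1}$ and reads it off the first row — an immaterial difference). The index bookkeeping in your entrywise computation is accurate, so no gaps.
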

\begin{proof}
  Recall that for $x,y\in G$ we denote by $m(x,y)$ the entry in $M$ in row $x$ and column $y$.  If  $M$ is strictly group-developed over $G$ with map $f\colon G\rightarrow \mathbb{C}$, then $m(x,y)=f(xy)$; if $g\in G$, then $L(g)^\intercal=L(g^{-1})$ implies
\begin{eqnarray*}
R(g)ML(g^{-1})= \left[ \sum_{x, y} \delta^{w g}_{x} f(xy) \delta^{gy}_{z} \right]_{w,z\in G}
 =  \left[ f\left( (w  g)(g^{-1}  z)\right) \right]_{w,z\in G}
 =  M.
\end{eqnarray*}
Conversely, if $M=[m(x,y)]_{x,y\in G}$ satisfies $M=R(g)ML(g)^\intercal=R(g)ML(g^{-1})$ for all $g\in G$, then a  calculation similar to the one before shows that $[m(xg,g^{-1}y)]_{x,y\in G}=[m(x,y)]_{x,y\in G}$ for all $g\in G$. By choosing $g=x^{-1}$ we find that $m(x,y)=m(1,xy)$ for all $y$, and so  $M=[f(xy)]_{x,y\in G}$ where $f\colon G\to \C$ is defined by $f(g)=m(1,g)$.\qedhere
\end{proof}

The next theorem characterises the existence of group-developed matrices.

\begin{theorem}\label{thm:grp}
A matrix with entries in a finite group $\mathcal{A}\leq \mathbb{C}^\times$ is group-developed over $G$ if and only if there exists an $\mathcal{A}$-equivalent matrix in $\text{C}(R)$, where $R$ is the right regular representation of $G$.
\end{theorem}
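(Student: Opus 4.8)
The plan is to connect the characterization just established in Lemma~\ref{lemsgd}—which describes \emph{strictly} group-developed matrices via the condition $R(g)ML(g)^\intercal = M$—to the centraliser algebra $\text{C}(R)$ of the right regular representation. The key observation is that strictly group-developed is the $L,R$-bilateral invariance condition, whereas membership in $\text{C}(R)$ is a commuting (centraliser) condition involving $R$ alone. The bridge between these two is the matrix $N=[\delta^x_{y^{-1}}]_{x,y\in G}$ introduced in~\eqref{eq_N1}, which conjugates $R$ into $L$.

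Let me sketch the steps. First I would recall from the definition that ``group-developed'' means $\mathcal{A}$-equivalent to a strictly group-developed matrix, so it suffices to relate strictly group-developed matrices to $\text{C}(R)$ up to $\mathcal{A}$-equivalence. So suppose $M$ is strictly group-developed. By Lemma~\ref{lemsgd}, $R(g)ML(g)^\intercal = M$ for all $g\in G$. Using $L(g)^\intercal = L(g^{-1})$ and the relation $L(g)=NR(g)N$ from~\eqref{eq_N1} (noting $N=N^*=N^{-1}$ since $N^2=I_n$), I would substitute to rewrite the invariance condition. Setting $M' = MN$, I expect the bilateral condition $R(g)ML(g^{-1})=M$ to transform into $R(g)M' = M'R(g)$ for all $g\in G$, i.e. $M'\in\text{C}(R)$. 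Concretely, starting from $M=R(g)ML(g^{-1})=R(g)MNR(g^{-1})N$ and right-multiplying by $N$, one gets $MN=R(g)(MN)R(g^{-1})$, which is exactly $R(g)(MN)=(MN)R(g)$. Since $N$ is a permutation matrix (the permutation $y\mapsto y^{-1}$), $M' = MN$ is $\mathcal{A}$-equivalent to $M$, giving one direction.

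For the converse, I would reverse this computation. Suppose $M'\in\text{C}(R)$, so $R(g)M'=M'R(g)$ for all $g$. Setting $M = M'N$ (again an $\mathcal{A}$-equivalent matrix, since $N$ is a permutation matrix), I would run the algebra backwards: from $R(g)M'=M'R(g)$ one derives $R(g)M'N = M'R(g)N = M'NNR(g)N = (M'N)L(g) = ML(g)$, using $NR(g)N=L(g)$ and $N^2=I_n$. Rearranging via $L(g)^\intercal=L(g^{-1})$ yields $R(g)ML(g)^\intercal=M$ (after adjusting the variable $g\mapsto g^{-1}$ as needed), so Lemma~\ref{lemsgd} shows $M$ is strictly group-developed, hence $M'$ is group-developed. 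Both directions then assemble into the stated equivalence.

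The computation is essentially routine once the correct substitution $M'=MN$ is identified, so I do not anticipate a genuine obstacle. The one point requiring care is bookkeeping the transpose and inverse relations in~\eqref{eq_N1}: specifically keeping straight that $L(g)^\intercal=L(g^{-1})$ and $NR(g)N=L(g)$, and tracking whether the correct statement uses $g$ or $g^{-1}$ at each stage. I would verify the index substitution explicitly on one row--column entry if the abstract matrix manipulation leaves any ambiguity, and I would make sure to emphasize at the outset that $N$ being a permutation matrix is precisely what guarantees $M$ and $MN$ are $\mathcal{A}$-equivalent, which is what lets us pass between ``group-developed'' and membership in $\text{C}(R)$.
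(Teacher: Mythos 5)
Your proposal is correct and follows essentially the same route as the paper: both directions reduce to Lemma~\ref{lemsgd} via the conjugating permutation matrix $N$ from~\eqref{eq_N1}, with the substitution $M\mapsto MN$ supplying the $\mathcal{A}$-equivalence. The bookkeeping of $L(g)^\intercal=L(g^{-1})$ and $NR(g)N=L(g)$ that you flag as the delicate point is exactly the content of the paper's argument, and your handling of it is sound.
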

\begin{proof}
Any group-developed matrix is $\mathcal{A}$-equivalent to a strictly group-developed matrix, so we can assume that  $M$ is a strictly group-developed matrix over $G$. Lemma~\ref{lemsgd} shows that $R(g)ML(g)^\intercal = M$ for all $g \in G$. Thus, $R(g)M=ML(g^{-1})^\intercal$,  and  \eqref{eq_N1}  implies that $R(g)MN=MNR(g)$, hence  $MN \in \mathrm{C}(R)$.

Conversely, let $M\in\text{C}(R)$. Then $R(g)M=MR(g)$ for all $g\in G$, and using $R(g)=NL(g)N$, we obtain $R(g)M=MNL(g)N$. Since $L(g)^{-1}=L(g)^\intercal$, see \eqref{eq_N1}, this is equivalent to  $R(g)MNL(g)^\intercal=MN$. Now Lemma \ref{lemsgd} shows that $M$ is group-developed over $G$, as claimed.
\end{proof}

\subsection{Cocyclic development and monomial representations}\label{sec:coc}
Having discussed the relation between group-developed matrices and permutation representations in the previous section, we now locate cocyclic development in the theory of monomial representations. As described by Goldberger and collaborators \cite{Assaf2}, these ideas lead to a more general theory of cohomology development of matrices.

As before, let $G$ be a finite group and let $\mathcal{A} \leq \mathbb{C}^{\times}$ be a finite subgroup; note that $\mathcal{A}$ is a cyclic group, which we consider a $G$-module with trivial action. Let $\Gamma$ be a central  extension of $G$ by $\mathcal{A}$. By standard theory of group extensions, $\Gamma$ is isomorphic to a group with underlying set $\mathcal{A}\times G$ and multiplication
\begin{align}\label{eqpsi} (a, g)  (b, h) = (ab\psi(g,h), gh)
\end{align}
where $\psi\colon G\times G \rightarrow \mathcal{A}$ is a (normalised) $2$-cocycle, that is, a function satisfying  $\psi(g,1)=\psi(1,g)=1$ and $\psi(g,h)\psi(gh,k) = \psi(g,hk)\psi(h,k)$ for all $g,h,k\in G$; we refer to \cite[Chapter 15]{HallGroups} or \cite[Chapter 12]{deLauneyFlannery} for further details. In the following we write $\Gamma = (G, \mathcal{A}, \psi)$ to record these data about the group extension.

A matrix $M$ with entries in $\mathcal{A}$ is called \textit{strictly cocyclic over $G$} if there exists a cocycle $\psi: G\times G \rightarrow \mathcal{A}$ and a function $\phi: G\rightarrow \mathcal{A}$ such that
\[ M = \left[ \psi(x,y)\phi(xy) \right]_{x,y \in G},\]
where the rows and columns are indexed by the elements of $G$ in a fixed ordering. As for the group-developed case, it is often convenient to consider $\mathcal{A}$-equivalence: the matrix $M$ is \textit{cocyclic over $G$} if it is $\mathcal{A}$-equivalent to a strictly cocyclic matrix.

Motivated by the right and left regular representations of $G$, we define the following monomial representations $R$ and $L$ of $\Gamma$: if $(a,g)$ is an element of the extension $\Gamma$, then
\begin{align}\label{eqRL} R(a, g) = a \left[ \psi(x, g) \delta^{x g}_{y}\right]_{x, y \in G}\quad\text{and}\quad
L( a, g) = a\left[ \psi(g, g^{-1}x) \delta^{x}_{gy} \right]_{x,y \in G}.
\end{align}
That $R$ is a homomorphism follows from the cocycle identity and a  short calculation:
\begin{eqnarray*}
R(a,g) R(b,h) & = & ab \left[ \sum\nolimits_{y\in G} \psi(x, g) \delta^{x g}_{y} \psi(y, h) \delta^{y h}_{z}\right]_{x,z\in G} \\
& = & ab \left[ \psi(x, g) \psi(x g, h) \delta^{x g h}_{z} \right]_{x,z\in G} \\
& = & ab \left[ \psi(x, gh) \psi(g, h) \delta^{x gh}_{z}\right]_{x,z\in G} \\
& = & ab \psi(g,h) \left[ \psi(x, gh) \delta^{x gh}_{z} \right]_{x,z\in G} \\
& = & R(ab \psi(g,h), gh)\,.
\end{eqnarray*}
In applications it will be necessary to consider $L^*$ instead of $L$. The following property holds for $L^*$, which also shows that $L$ is a homomorphism; recall that $\mathcal{A}$ is a finite  subgroup of $\mathbb{C}^\times$, so $a^*=a^{-1}$ for $a\in\mathcal{A}$:
\begin{eqnarray*}
L(a,g)^{\ast} L(b,h)^{\ast} & = & a^{-1}b^{-1} \left[ \sum\nolimits_{y\in G} \psi(g, g^{-1}y)^{-1} \delta^{gx}_{y} \psi(h, h^{-1}z)^{-1} \delta^{hy}_{z}\right]_{x,z\in G} \\
& = & a^{-1}b^{-1} \left[ \psi(g, x)^{-1} \psi(h, gx)^{-1} \delta^{hgx}_{z} \right]_{x,z\in G} \\
& = & a^{-1}b^{-1} \left[ \psi(h, g)^{-1} \psi(hg, x)^{-1} \delta^{hgx}_{z}\right]_{x,z\in G} \\
& = & a^{-1}b^{-1} \psi(h,g)^{-1} \left[ \psi(hg, g^{-1}h^{-1}z)^{-1} \delta^{hgx}_{z} \right]_{x,z\in G} \\
& = & L(ba\psi(h,g), hg)^{\ast}\,.
\end{eqnarray*}
As in the group-developed case, the representations $R$ and $L$ are conjugate. To see this, we define  $N = [\psi(x,x^{-1})\delta_{y^{-1}}^{x}]_{x,y \in G}$ and use the cocycle identity to deduce that
 \begin{align*}
     R(a,g)NL(a,g)^{\ast} &= aa^{-1}\left[\sum\nolimits_{x, y}\psi(w,g)\delta_{x}^{wg}\psi(x,x^{-1})\delta_{y^{-1}}^{x}\psi(g,g^{-1}z)^{-1}\delta_{z}^{gy}\right]_{w,z} \\
     &= \left[\sum\nolimits_{ y}\psi(w,g)\psi(wg,g^{-1}w^{-1})\delta_{y^{-1}}^{wg}\psi(g,g^{-1}z)^{-1}\delta_{z}^{gy}\right]_{w,z}\\
    &=\left[\psi(w,g)\psi(wg,g^{-1}w^{-1})\psi(g,g^{-1}w^{-1})^{-1}\delta_{z^{-1}}^{w}\right]_{w,z} \\
    &=[\psi(w,w^{-1})\delta_{z^{-1}}^{w}]_{w,z \in G} = N.
 \end{align*}
Thus, if $(a,g)\in \Gamma$, then  \begin{align}\label{eq_N2}L(a,g) &= N^*R(a,g)N.
\end{align}

\begin{lemma}\label{lemRMLM}
With the previous notation, let $\Gamma= (G, \mathcal{A}, \psi)$ be a central group extension and $\phi\colon G\to \mathcal{A}$ a map. Then $M = \left[ \psi(x,y) \phi(xy) \right]_{x,y\in G}$ if and only if $R(a,g) M L(a,g)^\ast = M$ for all $(a, g) \in \Gamma$.
\end{lemma}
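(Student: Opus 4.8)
The plan is to follow the same strategy as the proof of Lemma~\ref{lemsgd} in the group-developed case, with the cocycle identity taking over the role played there by associativity of group multiplication. Both directions rest on a single entrywise computation of the triple product $R(a,g)ML(a,g)^{\ast}$. The crucial structural observation is that $R(a,g)$ has exactly one nonzero entry in row $w$, located in column $wg$ with value $a\psi(w,g)$, while $L(a,g)^{\ast}$ has exactly one nonzero entry in column $z$, drawn from row $g^{-1}z$ with value $a^{-1}\psi(g,g^{-1}z)^{-1}$ (using $a^{\ast}=a^{-1}$ for $a\in\mathcal{A}$). Consequently the $(w,z)$-entry of $R(a,g)ML(a,g)^{\ast}$ collapses to the single term $\psi(w,g)\,\psi(g,g^{-1}z)^{-1}\,m(wg,g^{-1}z)$, in which the scalar $a$ cancels; thus the invariance condition is really a family of relations indexed by $g\in G$ alone.

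For the forward direction I would substitute $m(x,y)=\psi(x,y)\phi(xy)$ into this collapsed expression, so that the $(w,z)$-entry becomes $\psi(w,g)\,\psi(wg,g^{-1}z)\,\psi(g,g^{-1}z)^{-1}\,\phi(wz)$. It then remains to check that the three cocycle factors multiply to $\psi(w,z)$, which is precisely the instance $\psi(w,g)\psi(wg,g^{-1}z)=\psi(w,z)\psi(g,g^{-1}z)$ of the cocycle identity (first, second and third arguments $w$, $g$, $g^{-1}z$). Hence the $(w,z)$-entry equals $\psi(w,z)\phi(wz)=m(w,z)$, and $R(a,g)ML(a,g)^{\ast}=M$ as required.

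For the converse I would read the collapsed expression as the functional equation $m(w,z)=\psi(w,g)\,\psi(g,g^{-1}z)^{-1}\,m(wg,g^{-1}z)$, valid for all $w,z,g$. Specialising $g=w^{-1}$ gives $wg=1$ and $g^{-1}z=wz$, so $m(w,z)=\psi(w,w^{-1})\,\psi(w^{-1},wz)^{-1}\,m(1,wz)$. Defining $\phi(u)=m(1,u)$ and invoking the instance $\psi(w,w^{-1})\psi(1,wz)=\psi(w,z)\psi(w^{-1},wz)$ of the cocycle identity together with the normalisation $\psi(1,wz)=1$ yields $\psi(w,w^{-1})\psi(w^{-1},wz)^{-1}=\psi(w,z)$, whence $m(w,z)=\psi(w,z)\phi(wz)$ and $M$ is strictly cocyclic.

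The only real obstacle is bookkeeping: one must track the cocycle arguments through the matrix product and identify the correct specialisation ($g=w^{-1}$) and the correct instance of the cocycle relation in each direction. Once these are pinned down the argument is routine, so I expect no genuine difficulty beyond this clerical care. I also note that the cancellation of $a$ in the collapsed entry means it suffices to verify the relation for each $g\in G$, so no separate treatment of the central factor $\mathcal{A}$ is needed.
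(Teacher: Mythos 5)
Your proof is correct and follows essentially the same route as the paper: both directions reduce to the single collapsed $(w,z)$-entry $\psi(w,g)\,\psi(g,g^{-1}z)^{-1}\,m(wg,g^{-1}z)$ of the triple product and then invoke the appropriate instance of the cocycle identity. The only immaterial difference is in the converse, where you specialise $g=w^{-1}$ and read $\phi$ off the first row (which costs one extra application of the cocycle identity together with the normalisation $\psi(1,\cdot)=1$), whereas the paper sets $g$ equal to the column index and reads $\phi$ off the first column, so that $\psi(g,1)=1$ kills the correction factor directly.
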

\begin{proof}
First, assume that $M = \left[ \psi(x,y) \phi(xy) \right]_{x,y\in G}$ is cocyclic. It follows from the cocycle identity that 
\begin{align*}
R(a,g) M L(a,g)^\ast  & =  aa^{-1} \left[ \sum\nolimits_{x, y} \psi(w,g) \delta^{w g}_{x} \psi(x, y) \phi(xy) \psi(g, g^{-1}z)^{-1} \delta^{gy}_{z}\right]_{w, z} \\
& =  \left[ \psi(w, g) \psi(w g, g^{-1}z) \psi(g, g^{-1}z)^{-1}\phi(wz)\right]_{w, z}\\
& = \left[ \psi(w, z) \phi(wz) \right]_{w, z} = M.
\end{align*}
Conversely, assume that $M=[m(x,y)]_{x,y\in G}$ satisfies $R(a,g) M L(a,g)^\ast = M$ for all $(a,g)\in \Gamma$. The chain of equations   
\begin{align*}
R(a,g) M L(a,g)^\ast & = a\left[ \psi(x,g)\delta^{xg}_{y} \right]_{x,y\in G} \left[ m(y,z) \right]_{y,z\in G} a^{-1} \left[ \psi(g,g^{-1}w)\delta^{w}_{gz} \right]_{z,w\in G}\\
& = \left[ \psi(x,g)m(xg,g^{-1}w)\psi(g,g^{1}w)^{-1} \right]_{x,w\in G}
\end{align*}
shows that $\psi(x,g)m(xg,g^{-1}w)\psi(g,g^{-1}w)^{-1} = m(x,w)$. Choose $g=w$, recall that $\psi(g,1)=1$; then $\psi(x,w)m(xw,1)=m(x,w)$. Thus, $M=\left[\psi(x,w)f(xw)\right]_{x,w\in G}$ where $f\colon G\to \mathbb{C}$ is defined by $f(g)=m(g,1)$, which shows that $M$ is cocyclic.  \qedhere
\end{proof}

The main result of this section is the following.

\begin{theorem}\label{thmCHM}
Let $M$ be a square matrix with entries in the finite subgroup $\mathcal{A}\leq \mathbb{C}^\times$ and with rows and columns labelled by a finite group $G$. Then $M$ is cocyclic over $G$ if and only if there exists an $\mathcal{A}$-equivalent matrix in $\text{C}(R)$, where $R$ is the monomial representation \eqref{eqRL} of an extension $\Gamma=\Gamma(G,\mathcal{A},\psi)$ of $G$.
\end{theorem}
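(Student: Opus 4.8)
The plan is to mirror the structure of the proof of Theorem~\ref{thm:grp}, replacing Lemma~\ref{lemsgd} with Lemma~\ref{lemRMLM} and the relation $L(g)=NR(g)N$ with the conjugacy relation~\eqref{eq_N2}. The key fact I would exploit is that $N$ is an invertible (indeed monomial, since $\psi(x,x^{-1})\in\mathcal{A}\subseteq\mathbb{C}^\times$) matrix satisfying $L(a,g)=N^{*}R(a,g)N$ for all $(a,g)\in\Gamma$, so that right-multiplication by $N$ converts the two-sided invariance condition of Lemma~\ref{lemRMLM} into a commutation condition defining $\text{C}(R)$.

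First I would reduce to the strict case: since $M$ is cocyclic over $G$ precisely when it is $\mathcal{A}$-equivalent to a strictly cocyclic matrix, and since the conclusion is an existence statement up to $\mathcal{A}$-equivalence, it suffices to prove the equivalence for strictly cocyclic $M$. So assume $M=[\psi(x,y)\phi(xy)]_{x,y\in G}$ is strictly cocyclic. By Lemma~\ref{lemRMLM} this is equivalent to $R(a,g)\,M\,L(a,g)^{*}=M$ for all $(a,g)\in\Gamma$. Substituting $L(a,g)^{*}=(N^{*}R(a,g)N)^{*}=N^{*}R(a,g)^{*}N$ from~\eqref{eq_N2} (using $N^{*}=N^{-1}$, which I would verify from $N^{2}$ computing to $I_n$ exactly as in the group-developed case), the invariance reads $R(a,g)\,M\,N^{*}R(a,g)^{*}N=M$. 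Right-multiplying by $N^{*}=N^{-1}$ and using that $R(a,g)^{*}=R(a,g)^{-1}$, since $R$ is a monomial representation with entries in $\mathcal{A}\subseteq\mathbb{C}^{\times}$ of unit modulus and hence unitary, yields $R(a,g)\,(MN^{*})=(MN^{*})\,R(a,g)$. Thus $MN^{*}\in\text{C}(R)$, and $MN^{*}$ is $\mathcal{A}$-equivalent to $M$ because $N$ is a monomial matrix with entries in $\mathcal{A}$, which supplies the required $\mathcal{A}$-equivalent element of the centraliser algebra.

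For the converse, I would start from an arbitrary $M\in\text{C}(R)$, so $R(a,g)M=MR(a,g)$ for all $(a,g)\in\Gamma$. Writing $R(a,g)=NL(a,g)N^{*}$ (the inverse of~\eqref{eq_N2}, again using $N^{*}=N^{-1}$) and using unitarity of $R$, I would retrace the displayed steps in reverse to obtain $R(a,g)\,(MN)\,L(a,g)^{*}=MN$ for all $(a,g)$. Lemma~\ref{lemRMLM} then shows $MN$ is strictly cocyclic, and since $N$ is monomial with entries in $\mathcal{A}$ the original $M$ is $\mathcal{A}$-equivalent to the cocyclic matrix $MN$, hence cocyclic over $G$.

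The main obstacle I anticipate is the bookkeeping around $N$ and the adjoints: I must confirm that $N^{*}=N^{-1}$ (equivalently $NN^{*}=I_n$, which follows from $N^{2}=I_n$ together with $N$ being a $\pm$-unitary monomial matrix over $\mathcal{A}$), and that $R(a,g)$ is genuinely unitary so that $R(a,g)^{*}=R(a,g)^{-1}$; both rest on the entries lying in $\mathcal{A}\subseteq\mathbb{C}^{\times}$ having modulus $1$, which is exactly the hypothesis in force. The conjugacy relation~\eqref{eq_N2} was derived only as $L=N^{*}RN$, so I would take one line to note its equivalent forms $R=NLN^{*}$ and $L^{*}=N^{*}R^{*}N$ before substituting. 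Apart from these adjoint manipulations the argument is a direct transcription of the group-developed proof, with Lemma~\ref{lemRMLM} doing the work that Lemma~\ref{lemsgd} did there.
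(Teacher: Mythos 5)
Your argument is essentially the paper's own proof: reduce to the strictly cocyclic case, apply Lemma~\ref{lemRMLM}, and use the conjugacy relation~\eqref{eq_N2} together with unitarity of $R(a,g)$ and $N$ to convert the two-sided invariance $R(a,g)ML(a,g)^{\ast}=M$ into the commutation $R(a,g)(MN^{\ast})=(MN^{\ast})R(a,g)$, and conversely; the paper's proof does exactly this, and your explicit remark that $MN^{\ast}$ (resp.\ $MN$) is $\mathcal{A}$-equivalent to $M$ because $N$ is monomial over $\mathcal{A}$ is a point the paper leaves implicit.

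One sub-claim in your justification is wrong, though it does not damage the proof: you propose to verify $N^{\ast}=N^{-1}$ by checking $N^{2}=I_{n}$ ``exactly as in the group-developed case.'' For the cocyclic $N=[\psi(x,x^{-1})\delta_{y^{-1}}^{x}]_{x,y}$ one computes $(N^{2})_{w,w}=\psi(w,w^{-1})\psi(w^{-1},w)=\psi(w,w^{-1})^{2}$ (using the normalised cocycle identity), which need not equal $1$ when $\mathcal{A}$ contains elements of order greater than $2$; so $N^{2}=I_{n}$ fails in general. The fact you actually use, $N^{\ast}=N^{-1}$, is nevertheless true for the reason you also give: $N$ is a monomial matrix whose nonzero entries lie in $\mathcal{A}\leq\mathbb{C}^{\times}$ and hence have modulus $1$, so $NN^{\ast}=I_{n}$. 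With that justification substituted, the proof is correct and coincides with the paper's.
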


\begin{proof}
Any cocyclic matrix is $\mathcal{A}$-equivalent to a strictly cocyclic matrix, so suppose that $M$ is strictly cocyclic over $G$, with extension group $\Gamma = (G, \mathcal{A}, \psi)$. Lemma~\ref{lemRMLM} shows that $R(a,g)ML(a,g)^{\ast}=M$ for all $(a,g) \in \Gamma$. Together with \eqref{eq_N2}, it follows that
\[
R(a,g)MN^{\ast} = MN^{\ast}R(a,g),
\]
and therefore $MN^{\ast} \in \text{C}(R)$. Conversely, let $M \in \text{C}(R)$ for an extension $\Gamma$ and representation $R$ as in the statement. By definition, $R(a,g)M = MR(a,g)$ for all $(a,g) \in \Gamma$, and using $R(a,g) = NL(a,g)N^{\ast}$, we obtain $R(a,g)M = MNL(a,g)N^{\ast}$, hence $R(a,g)MNL(a,g)^{\ast} = MN$.
\end{proof}

Group-developed matrices have constant row  and column sums, since each row and each column is a permutation of the first. If $H$ is a group-developed $n\times n$ Hadamard matrix with row and column sum $s$, then $HJ_n=sJ_n$ and $H^\intercal J_n = sJ_n$ for the $n\times n$ all-$1$s matrix $J_n$. Thus,  multiplying $nI_n=HH^\intercal$ from the right by $J_n$ yields
\[nJ_n = HH^\intercal J_n =s HJ_n = s^2 J_n\]
which forces that $n=s^2$ is a perfect square. This well-known observation restricts the orders at which group-developed Hadamard matrices exist. (Recall that the order of an $n\times n$ Hadamard matrix refers to the dimension $n$.) There are no known restrictions on the orders of cocyclic Hadamard matrices: indeed, it has been conjectured by Horadam \cite[Research Problem 38]{HoradamHadamard} that there exists a cocyclic Hadamard matrix of order $4n$ for all $n$. This conjecture has been verified for all $n <188$. Many constructions of Hadamard matrices are known to be cocyclic, including Sylvester and Paley matrices, \cite{mypaper-nonaffine, ronan}. Some families of cocyclic real and complex Hadamard matrices have also been classified computationally, \cite{mypaper-4p,mypaper-CocyclicComplex}.

\section{Character tables of centraliser algebras}\label{sec:cha}

Recall from Lemma \ref{lem:chmnorm} that a complex Hadamard matrix is characterised by norm conditions on its entries and eigenvalues. Theorem \ref{thmCHM} explains that the existence of a complex Hadamard matrix that is cocyclic with respect to some indexing group can be verified by studying a suitable centraliser algebra of a monomial representation. Theorem \ref{thm:basCentAlg} and Proposition \ref{prop:isorbital} allow us to determine a basis of a centraliser algebra. Thus, to locate complex cocyclic Hadamard matrices, it remains to consider linear combinations of the basis elements of the centraliser algebra and verify the norm conditions. For this the so-called character table of the centraliser algebra will be useful. We discuss this character table here, and focus on the construction of complex Hadamard matrices in  Sections \ref{sec:Apps} and \ref{Sec:comp}.

The representation theory of finite groups is closely related to the representation theory of associative algebras applied to the group algebra $\mathbb{C}[G]$. Several accessible expositions of this theory are available, including the books \cite{AlperinBell,IsaacsRep, LiebeckJames}. We recall that a finite dimensional associative algebra over $\mathbb{C}$ is \textit{semisimple} if its Jacobson radical is trivial, in which case the algebra is a direct sum of simple algebras.

Let $A$ be a finite dimensional semisimple $\mathbb{C}$-algebra. It is well-known that $A$ is a direct sum of matrix algebras, such that the number $r$ of matrix algebras occurring in this direct sum is equal to the number of isomorphism types of simple $A$-modules; see \cite[Lemma 13.14 and Theorem 13.16]{AlperinBell} and the details given in the proofs thereof. Since the centre of each matrix algebra consists of the scalar matrices, it also follows that $r$ is the dimension of the centre of $A$. We denote by $\{M_1,\ldots,M_r\}$ a basis of the centre of $A$ and let $\chi_1,\ldots,\chi_r$ be the irreducible characters of $A$. The \textit{character table} of $A$ is defined to be the $r\times r$ matrix
\[{\rm CT}(A)=\left[\chi_{i}(M_j)\right]_{i,j}.\]
For a representation $\rho$ of a finite group $G$ induced from a linear character $\chi$ of a subgroup $H$, the character table of the centraliser algebra $\text{C}(\rho)$ may be constructed from the character table of $G$, together with some additional data about double cosets of $H$ in $G$. Let $\{t_1,\dots, t_r\}$ be a set of representatives of the $H$-double cosets in $G$. For $i=1,\dots r$, let $H_i=H\cap t_{i}^{-1}Ht_{i}\leq H$ and $k_i=|H:H_i|$. Let $M_{G,H}$ be the matrix that contains the rows of the character table of $G$ corresponding to the irreducible constituents of $\rho$. Let $L$ be a matrix whose rows are indexed by $t_1,\ldots,t_r$, whose columns are indexed by conjugacy classes of $G$, and whose entry $\ell(t_i,C)$ is defined as \[\ell(t_i,C)=\sum_{h\in H} \delta_C(ht_i)\chi (h^{-1})\] where $\delta_C$ is the Kronecker delta for the conjugacy class $C$. While not especially well-known, the following result has appeared in the literature multiple times. A complete proof of the next result is given by M\"uller; closely related results were obtained previously by Tamaschke \cite{tamaschke}, by Higman \cite{HigmanI} and by Curtis-Fossum, \cite{CurtisFossum}. 

\begin{prop}[Proposition 3.20, \cite{MullerDissertation}]\label{prop_CT}
Let $\rho$ be the monomial representation of $G$ induced from a linear character $\chi$ of a subgroup $H\leq G$. Provided that the centraliser algebra $\text{C}(\rho)$ is commutative, its character table is
  \[{\rm CT}(\text{C}(\rho))=\frac{1}{|H|}\cdot M_{G,H}\cdot L^\intercal\cdot{\rm diag}(k_1,\ldots,k_r)\,,\]
with the notation adopted previously.
\end{prop}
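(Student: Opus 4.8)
The plan is to compute each character table entry $\chi_i(M_j)$ directly, as the scalar by which the orbital basis matrix $M_j$ acts on the $i$-th irreducible constituent of $\rho$, and then to recognise the resulting expression as the $(i,j)$ entry of the matrix product on the right. First I would set up the decomposition. Since $\text{C}(\rho)$ is commutative, $\rho$ is multiplicity-free, so $\C^n = \bigoplus_{i=1}^r V_i$ where the $V_i$ are pairwise non-isomorphic irreducible $\C[G]$-modules; their characters $\zeta_1,\ldots,\zeta_r$ are exactly the irreducible constituents of $\rho$ recorded in the rows of $M_{G,H}$. By Schur's Lemma each $M\in\text{C}(\rho)$ acts on $V_i$ as a scalar, and since $\text{C}(\rho)\cong\C^r$ this scalar is precisely the value $\chi_i(M)$ of the $i$-th irreducible character of the algebra. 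I take $M_1,\ldots,M_r$ to be the orientable orbital matrices of Theorem~\ref{thm:basCentAlg}, with $M_j$ associated to the double coset $Ht_jH$ and normalised so that its entry in position $(1,t_j)$ equals $1$; by commutativity there are exactly $r$ of these, matching the $r$ constituents. (Non-orientable double cosets contribute zero orbital matrices and are discarded, so $t_1,\ldots,t_r$ run over the orientable double cosets.)

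The key step is an eigenvalue formula obtained from character orthogonality. Since $M_j\rho(g)$ preserves each $V_i$, we have $\text{Tr}(M_j\rho(g))=\sum_i \chi_i(M_j)\zeta_i(g)$; averaging against $\overline{\zeta_i}$ over $G$ and using that the $\zeta_i$ are distinct irreducibles gives
\[
\chi_i(M_j) = \frac{1}{|G|}\sum_{g\in G}\text{Tr}\big(M_j\rho(g)\big)\,\zeta_i(g^{-1}).
\]
Now I would expand $\text{Tr}(M_j\rho(g))$ using Proposition~\ref{monrep}: a term survives only when $t_bgt_a^{-1}\in H$, which pins the row index and yields $\text{Tr}(M_j\rho(g))=\sum_b m(\myT(t_bg),t_b)\,\myU(t_bg)$. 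Substituting this, reindexing $g$ by $x=t_bg$ and splitting $x=\myH(x)\myT(x)$ over $T\times H$, the summand reorganises (as $\zeta_i$ is a class function) into $\sum_{(t_a,t_b)}\sum_{h\in H} m(t_a,t_b)\,\chi(h^{-1})\,\zeta_i(h\,t_bt_a^{-1})$, where $(t_a,t_b)$ ranges over points of the orbital $\mathcal O_{Ht_jH}$ since $M_j$ is supported there.

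The heart of the argument is the collapse of this sum. On each orbital point $(t_a,t_b)=(\myT(k),\myT(t_jk))$ the centraliser relation \eqref{eq_cm1} makes $m(t_a,t_b)=\myU(k)^{-1}\myU(t_jk)$ cancel exactly against the $\myH$-factors produced by rewriting $t_bt_a^{-1}=\myH(t_jk)^{-1}t_j\myH(k)$; after a change of variable in $h$ the inner sum collapses to $\sum_{h\in H}\chi(h^{-1})\zeta_i(ht_j)$, independent of the chosen base point. It then remains to count base points: the orbital meets $T\times T$ in $nk_j$ points, where $k_j=|H:H_j|$ is the subdegree (from $|Ht_jH|=|H|^2/|H\cap t_j^{-1}Ht_j|$), so with $|G|=n|H|$ we obtain
\[
\chi_i(M_j) = \frac{nk_j}{|G|}\sum_{h\in H}\chi(h^{-1})\zeta_i(ht_j) = \frac{k_j}{|H|}\sum_{h\in H}\chi(h^{-1})\zeta_i(ht_j).
\]

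Finally, a direct expansion gives $(M_{G,H}L^\intercal)_{ij}=\sum_C \zeta_i(C)\,\ell(t_j,C)=\sum_{h\in H}\chi(h^{-1})\zeta_i(ht_j)$, using $\sum_C\zeta_i(C)\delta_C(ht_j)=\zeta_i(ht_j)$; multiplying on the right by $\tfrac{1}{|H|}\text{diag}(k_1,\ldots,k_r)$ reproduces the claimed entries. The main obstacle is the orbital collapse in the third paragraph: tracking the transversal bookkeeping so that the orbital-matrix scalars cancel and the inner sum becomes base-point independent. As an alternative that conceptualises exactly this cancellation, one may identify $\text{C}(\rho)$ with the Hecke algebra $e_\chi\C[G]e_\chi$ for the idempotent $e_\chi=\tfrac{1}{|H|}\sum_{h\in H}\chi(h^{-1})h$, under which $M_j$ corresponds to $k_j\,e_\chi t_j e_\chi$ and $\chi_i$ to $\zeta_i$ extended linearly, reducing the computation to $\zeta_i(e_\chi t_j e_\chi)=\tfrac{1}{|H|}\sum_{h\in H}\chi(h^{-1})\zeta_i(ht_j)$.
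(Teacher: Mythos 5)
The paper does not actually prove Proposition \ref{prop_CT}: it is quoted from M\"uller's dissertation (with pointers to Tamaschke, Higman and Curtis--Fossum), so there is no internal argument to compare yours against. Your proposal is a correct, self-contained derivation, and the route is the natural one: identify $\chi_i(M_j)$ with the scalar $\lambda_{i,j}$ by which $M_j$ acts on the $i$-th isotypic component (legitimate because commutativity forces $\rho$ to be multiplicity-free), extract that scalar by orthogonality from $\mathrm{Tr}(M_j\rho(g))=\sum_i\chi_i(M_j)\zeta_i(g)$, and then unwind the trace using the explicit matrices of Proposition \ref{monrep}. I checked the two delicate steps. First, the cancellation: for $(t_a,t_b)=(\myT(k),\myT(t_jk))$ one has $t_bt_a^{-1}=\myH(t_jk)^{-1}t_j\myH(k)$, and after the substitution $h'=\myH(k)h\myH(t_jk)^{-1}$ the factor $\chi(h^{-1})$ contributes exactly $\myU(t_jk)^{-1}\myU(k)\chi(h'^{-1})$, which cancels $m(t_a,t_b)=\myU(k)^{-1}\myU(t_jk)$; the inner sum becomes $\sum_{h'}\chi(h'^{-1})\zeta_i(h't_j)$ independently of the base point, and orientability is precisely what makes $m(t_a,t_b)$ well defined here. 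Second, the count $nk_j$ for the number of positions in $T\times T$ supporting the orbital is right, and together with $|G|=n|H|$ it yields $\chi_i(M_j)=\tfrac{k_j}{|H|}\sum_h\chi(h^{-1})\zeta_i(ht_j)$, which visibly matches the $(i,j)$ entry of $\tfrac{1}{|H|}M_{G,H}L^\intercal\mathrm{diag}(k_1,\dots,k_r)$. You also correctly flag a point the paper glosses over: the statement indexes both the double cosets and the irreducible constituents by the same $r$, which only makes sense once the non-orientable double cosets (whose orbital matrices vanish) are discarded; making that restriction explicit, as you do, is needed for the matrix product to have the right shape. The closing Hecke-algebra remark ($\mathrm{C}(\rho)\cong e_\chi\C[G]e_\chi$) is a valid conceptual shortcut for the same computation. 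In short: the proof is sound and supplies the argument the paper delegates to the literature.
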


The computations required by Proposition \ref{prop_CT} are practical for reasonably sized groups. While non-commutative centraliser algebras are not much more difficult to treat, we restrict to the commutative case in the remainder of this paper. It is well known that  $\text{C}(\rho)$  is commutative if and only if $\rho$ is multiplicity-free, see e.g. \cite[Theorem 1.7.8]{sagan}. The semisimplicity of $\text{C}(\rho)$ implies that $M_1,\ldots,M_r$ are simultaneously diagonalisable, with $r$ common eigenspaces, denoted $V_{1}, \ldots, V_{r}$. Write $\lambda_{i,j}$ for the eigenvalue of $M_{j}$ on the eigenspace $V_{i}$. In this case the character table may be described as
\begin{align}\label{eq_ct}{\rm CT}({\rm \text{C}(\rho)})=[\lambda_{i,j}]_{i,j}\,. \end{align}
Every $M\in\textrm{C}(\rho)$ can be written as $M=\sum_{i=1}^{r} \alpha_{i} M_{i}$ for complex coefficients $\alpha_1,\ldots,\alpha_r$. The eigenvalue of $M$ on the eigenspace $V_i$ is given given by the $i^{\textrm{th}}$ entry of the vector $T\underline{\alpha}$ where $\underline{\alpha}=(\alpha_1,\ldots,\alpha_r)^\intercal$. It follows from Lemma~\ref{lem:chmnorm} that  $M$ is a complex Hadamard matrix if each entry in $\underline{\alpha}$ has norm $1$ and each entry in $T\underline{\alpha}$ has norm $\sqrt{n}$. In other words, there exists a complex Hadamard matrix, say $M$, in $\text{C}(\rho)$ if and only if there is a solution of the system $T\underline{\alpha} = \underline{\lambda}$ where the entries of $\underline{\alpha}$ (the entries of $M$) have norm $1$ and the entries of $\underline{\lambda}$ (the eigenvalues of $M$) have norm $\sqrt{n}$. As discussed in Section \ref{sec:GBprelim}, this yields a system of linear equations and norm equations over a subfield of the complex numbers. While norm equations are not polynomial over $\mathbb{C}$, the system of equations can be rewritten as a real system of quadratic equations in twice as many variables. The next two examples illustrate the construction of complex Hadamard matrices in the centraliser algebra of a permutation group.

\begin{example}
Let $G\leq \sym_{16}$ be the group $G=\langle\sigma,\tau\rangle$ where
\begin{align*} \sigma &= (1, 2)(3, 4)(5, 6)(7, 8)(9, 10)(11, 12)(13, 14)(15, 16),\\\tau&= (2, 3, 5, 9, 16)(4, 7, 13, 8, 15)(6, 11, 12, 10, 14).\end{align*}
The group  $G$ is a Frobenius group of order $80$, with an elementary abelian subgroup of order $16$ and a point stabiliser $H =\langle \tau \rangle$ of order $5$. Let $\rho$ be the permutation representation induced by the trivial character $\chi$ of~$H$. As a list of $H$-double coset representatives in $G$, we choose the identity, $\sigma$, $\sigma^{-1}\tau^{-1}\sigma\tau$, and $\sigma^{-1}\tau^{-2}\sigma\tau^2$.
We denote by $M_1,\ldots,M_4$ the corresponding  orbital matrices; these form a basis for the (commutative) centraliser algebra. We note that $M_1$ is the identity matrix and $M_2,\ldots,M_4$ have constant row sum $5$.  Using  Proposition~\ref{prop_CT}, the character table of the centraliser algebra is 
\[
T =  \begin{blockarray}{cccc}
 M_1 & M_2 & M_3 & M_4\;\;\\ 
 \begin{block}{(rrrr)}
1&   5&   5&   5\;\; \\
   1&  -3&   1&   1\;\;\\
 1&   1&  -3&   1\;\;\\
 1&   1&   1&  -3\;\;\\
 \end{block}\end{blockarray}
\]The eigenvalues of the matrix  $M=M_{1} + M_{2} - M_{3} - M_{4}$ with entries $\{\pm 1\}$ are $-4$ and $4$, so $M$ is a $16\times 16$ Hadamard matrix by Lemma \ref{lem:chmnorm}. More generally, all solutions to $T\underline{\alpha} = \underline{\lambda}$ with $\alpha_{i}\alpha_{ic} = 1$ and $\lambda_{j}\lambda_{jc} = 4$ are obtained via computing the  Gr\"obner basis as described in Section \ref{sec:GBprelim}. All solutions are of the form $\underline{\alpha} = (1,z,-z,-1)$, or a cyclic permutation of the last three coordinates, where $z$ is of norm $1$.
\end{example}

\begin{example} \label{PaleyGraph}
Let $p\equiv 1\bmod 4$ be a prime and denote by $\mathbb{F}_p$ the field with $p$ elements. Let ${\rm AGL}_1(p)$ be the permutation group consisting of affine transformations of  $\mathbb{F}_p$ of the form  $x \mapsto ax + b$, where $a \in \mathbb{F}_{p}^{\times}$ and $b \in \mathbb{F}_{p}$; this group is  $2$-transitive of degree $p$. The index-2 subgroup $G$ of ${\rm AGL}_1(p)$ consisting of transformations $x \mapsto a^{2}x+ b$ has  rank $3$. The stabiliser of a point is cyclic of order $(p-1)/2$, and we consider the permutation representation of degree $p$ induced from the trivial character of the point stabiliser. The centraliser algebra $\text{C}$ of this representation has dimension $3$, and it is well known that one of the basis elements (constructed as in Theorem \ref{thm:basCentAlg}) is an adjacency matrix, $A$, for the so-called \textit{Paley graph}, which has vertices labelled by the elements of $\mathbb{F}_{p}$, and  vertices $x$ and $y$ are connected by an edge if and only if $x-y$ is a quadratic residue in $\mathbb{F}_{p}$, see \cite[Section~7.4.4]{BvM}. 

The Paley graph is regular of degree $k= (p-1)/2$, so $k$ occurs as an eigenvalue of $A$ with multiplicity $1$. By standard results in algebraic graph theory, the other eigenvalues are $\mu = (-1 + \sqrt{p})/2$ and $\nu = (-1 - \sqrt{p})/2$, each with multiplicity $k$. It follows that a basis for the centraliser algebra of $G$ can be constructed as $\{M_1,M_2,M_3\}$, where $M_{1} = I_{p}$, $M_{2} = A$, and $M_{3} = J_p - I_{p} - A$. The all-$1$s matrix $J_p$ has a $1$-dimensional $p$-eigenspace and a $(p-1)$-dimensional $0$-eigenspace. The matrices $J_p,I_p,A$ are simultaneously diagonalisable, so they share common eigenspaces.  This means that the nullspace of $J_p$ is the direct sum of the $\mu$- and $\nu$-eigenspaces of $A$. Since $M_3$ is a linear combination, its value on the $p$-eigenspace of $J_p$ is $p-k-1=k$, on the $\mu$-eigenspace it is $-1-\mu=\nu$, and on the  $\nu$-eigenspace it is  $-1-\nu=\mu$. Thus, the eigenvalues $\lambda_1,\lambda_2,\lambda_3$ of the matrix $M = \alpha_{1}M_{1} + \alpha_{2}M_{2} + \alpha_{3}M_{3}$ are computed as follows:
\[
\begin{pmatrix}
    1 & k & k \\
    1 & \mu & \nu \\
    1 & \nu & \mu \\
\end{pmatrix}
\begin{pmatrix} \alpha_{1} \\ \alpha_{2} \\ \alpha_{3} \end{pmatrix}
= \begin{pmatrix} \lambda_{1} \\ \lambda_{2} \\ \lambda_{3} \end{pmatrix} \,.
\]
Lemma~\ref{lem:chmnorm} shows that $M$ is Hadamard if and only if $\alpha_{i}\alpha_{i}^{\ast} = 1$ and $\lambda_{i}\lambda_{i}^{\ast} = p$ for $i = 1,2,3$. For this consider the rational polynomial ring $\mathcal{R} = \mathbb{Q}[p, \mu, \alpha_{2}, \alpha_{2c}, \alpha_{3}, \alpha_{3c}]$. We construct the polynomials $P_{2},P_{3},Q_{1},Q_{2},Q_{3}$ to encode the norm conditions on $\alpha_{i}$ and $\lambda_{j}$ as in Section \ref{sec:GBprelim}. In addition, we introduce a polynomial $R = (2\mu + 1)^{2} - p$ to encode the relation between $\mu$ and $p$. In this case, the computation yields a number of isolated points, all of which require $p \leq 4$, and one nontrivial component. This component may be parametrised in terms of $\mu = (-1 + \sqrt{p})/2$ as
\[ \alpha_{1} = 1, \quad  \alpha_{2} = \tfrac{ -1 + \sqrt{1-4\mu^2}}{2\mu},\quad\text{and}\quad \alpha_{3} = \alpha_{2c}=\alpha_2^*\,;\]
recall that $p\equiv 1\bmod 4$ is a prime. This solution is unique up to permuting  $\alpha_{2}$ and $\alpha_{3}$, and replacing both by complex conjugates. For example, if $p=5$, then $\alpha_1=1$, $\alpha_2= -1 + \imath \sqrt{ (5 - \sqrt{5})/8}$, and $\alpha_3=\alpha_2^*$.
\end{example}

\section{Constructing complex Hadamard matrices: Schur multipliers}\label{sec:Apps}

Let $M$ be an $n \times n$ complex Hadamard matrix and recall that $\mathrm{SAut}(M)$ is the subgroup of $\Aut(M)$ consisting of pairs $(P,P)$ with $PMP^\ast=M$.  Denote by $\pi_{1}$ and $\pi_{2}$ the projections of $\mathrm{Aut}(M)$ onto the first and second components, respectively, and set $\Gamma = \pi_{1}(\mathrm{SAut}(M))$. Let $\pi\colon\Gamma\to \textrm{Sym}_n$ be the homomorphism that maps a monomial matrix to the induced permutation matrix (identified with a permutation in $\textrm{Sym}_n$). For the rest of this section, we assume that $G = \pi(\Gamma)$ is transitive. We describe relations between the groups $G$, $\Gamma$, and $\mathrm{SAut}(M)$, and the matrix $M$. This is required for Section \ref{Sec:comp} where we start with a permutation group $G$ and construct complex Hadamard matrices such that $\pi\circ\pi_{1}(\textrm{SAut}(M)) \leq G$. 

The next proposition shows that $\textrm{SAut}(M)$ contains a finite subgroup, specified entirely by $G$, which determines the centraliser algebra $\text{C}(\Gamma)$ completely. Here we use the convention that for a matrix group $K$ we denote by $\text{C}(K)$ the centraliser algebra of the identity representation $K\to K$. For an integer $m$ we denote by $\zeta_m$ a primitive complex root of unity; if $K$ is a group, then $K'=[K,K]$ is the commutator subgroup.

\begin{prop} \label{prop:GammaStructure}
 Let $M$ be an $n \times n$ complex Hadamard matrix, and let $\Gamma = \pi_{1}({\rm SAut}(M))$ and  $G = \pi(\Gamma)$ as defined above. Let $\Gamma_\fin=\{L\in \Gamma \mid \det(L)=1\}$. The projection $\Gamma_\fin\to G$ induced by $\pi$ is surjective with cyclic and central kernel $\langle\zeta_nI_n\rangle$, so $|\Gamma_\fin|=n|G|$ is finite.  Moreover, $M\in\text{C}(\Gamma)$ and $\text{C}(\Gamma)=\text{C}(\Gamma_\fin)$.
\end{prop}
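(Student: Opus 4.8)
The plan is to first pin down the arithmetic nature of the elements of $\Gamma$, and then read off all four assertions from it. The key preliminary observation is that every $P\in\Gamma$ is a \emph{unitary} monomial matrix, i.e.\ its nonzero entries have norm $1$. Indeed, if $(P,P)\in\mathrm{SAut}(M)$ has its unique nonzero entries $p_i$ in row $i$ and $p_j$ in row $j$, and $\sigma=\pi(P)$ denotes the underlying permutation, then comparing the $(i,j)$-entry of $PMP^\ast=M$ gives $p_i\,\overline{p_j}\,M_{\sigma(i),\sigma(j)}=M_{ij}$. Since every entry of $M$ has norm $1$ by the Hadamard property, taking norms yields $|p_i|\,|p_j|=1$ for all $i,j$; setting $i=j$ forces $|p_i|=1$. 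Hence each $P\in\Gamma$ satisfies $P^\ast=P^{-1}$, so $PMP^\ast=M$ becomes $PM=MP$. This already establishes $M\in\text{C}(\Gamma)$.

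Next I would analyse $\pi$ restricted to $\Gamma$. A scalar matrix $cI_n$ with $|c|=1$ lies in $\Gamma$, since $(cI_n)M(cI_n)^\ast=|c|^2M=M$; conversely, if a diagonal $D=\mathrm{diag}(d_1,\ldots,d_n)\in\Gamma$ lies in $\ker\pi$, then comparing entries in $DMD^\ast=M$ gives $d_i\overline{d_j}=1$ for all $i,j$, whence $D=cI_n$ for a unit scalar $c$. Thus $\ker(\pi|_\Gamma)$ is the full circle group of unitary scalars, in particular infinite; this is precisely what motivates passing to $\Gamma_\fin$. Every $P\in\Gamma$ is unitary monomial, so $|\det P|=1$, and choosing a unit scalar $c$ with $c^n=(\det P)^{-1}$ (possible since $(\det P)^{-1}$ lies on the unit circle) gives $cP\in\Gamma_\fin$ with $\pi(cP)=\pi(P)$. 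This proves that $\pi\colon\Gamma_\fin\to G$ is surjective. Its kernel is $\{cI_n : |c|=1,\ c^n=1\}=\langle\zeta_nI_n\rangle$, which is cyclic of order $n$ and central. By the first isomorphism theorem, $|\Gamma_\fin|=n|G|$ is finite.

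Finally, for $\text{C}(\Gamma)=\text{C}(\Gamma_\fin)$ I would use that $\Gamma$ is generated by $\Gamma_\fin$ together with the unitary scalars: the scalar adjustment above writes each $P\in\Gamma$ as $P=c^{-1}(cP)$ with $cP\in\Gamma_\fin$ and $c^{-1}I_n$ a scalar. The inclusion $\text{C}(\Gamma)\subseteq\text{C}(\Gamma_\fin)$ is immediate from $\Gamma_\fin\leq\Gamma$. For the reverse inclusion, any matrix commuting with all of $\Gamma_\fin$ also commutes with every scalar matrix (these are central in $\mathrm{M}_n(\mathbb{C})$), hence with the entire generating set, hence with $\Gamma$.

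The entrywise computations are routine. The one point demanding care, and the conceptual crux, is recognising that $\ker(\pi|_\Gamma)$ is the entire circle group of unitary scalars rather than a finite group; this is what forces the determinant-one truncation $\Gamma_\fin$, and the mild obstacle is then verifying that this truncation still surjects onto $G$ while cutting the kernel down to exactly the $n$-th roots of unity.
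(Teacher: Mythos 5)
Your proof is correct and follows the same overall architecture as the paper's: identify $\ker(\pi|_\Gamma)$ as the unit scalars, rescale an arbitrary $L\in\Gamma$ by a unit scalar to land in $\Gamma_\fin$, read off the kernel $\langle\zeta_nI_n\rangle$, and deduce $\text{C}(\Gamma)=\text{C}(\Gamma_\fin)$ from the centrality of scalars. The one place you genuinely diverge is in how you justify that the rescaling is possible, i.e.\ that $|\det L|=1$. The paper gets this indirectly: it takes $d$ to be the exponent of $G/G'$, writes $L^d=SA$ with $S=\zeta I_n\in\ker\pi$ and $A\in\Gamma'$, notes $\det A=1$, and concludes $|\det L|^d=|\zeta|^n=1$. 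You instead observe up front that every $P\in\Gamma$ is a \emph{unitary} monomial matrix, by comparing norms of entries in $PMP^\ast=M$ and using that all entries of a complex Hadamard matrix have norm $1$; unitarity gives $|\det P|=1$ immediately. Your route is shorter and buys something extra: the same unitarity observation converts $PMP^\ast=M$ into $PM=MP$, which is exactly the justification needed for the assertion $M\in\text{C}(\Gamma)$ --- a step the paper dismisses with ``by definition'' even though it is not a formal consequence of the definition of $\mathrm{SAut}(M)$ for a general monomial $P$. So your write-up is not only valid but makes explicit a point the paper leaves implicit.
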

\begin{proof}
 By definition, $M\in\text{C}(\Gamma)$. The kernel of $\pi$ consists of diagonal matrices $D={\rm diag}(a_1,\ldots,a_n)$ with $DMD^\ast=M$. Since the entries of $M$ are all nonzero, this forces $a_ia_j^\ast=1$ for all $i,j$, which shows that $\ker\pi$ consists exactly of all scalar matrices whose entries are complex units.

Now consider $L\in\Gamma$. Let $d$ be the (finite) exponent of $G/G'$. Since $\pi$ maps $\Gamma'$ to $G'$, the $d^{\mathrm{th}}$ power of $L$ satisfies $\pi(L^d)\in G'$. In particular, $L^d=SA$ for some scalar matrix $S=\zeta I_n\in\ker\pi$ and a matrix $A\in \Gamma'$. Note that $\det(A)=1$, so $\det(L^d)=\zeta^n$. Set $\sigma = (\det(L)^{1/n})^{\ast}$. Then $\sigma I_{n} \in \ker \pi$ and hence $\sigma L \in \Gamma$. By construction, $\det(\sigma L)=1$ and $\pi(\sigma L)=\pi(L)$. Since $\pi\colon\Gamma\to G$ is surjective, this implies that also the projection $\Gamma_\fin\to G$ is surjective. The elements in the kernel of that projection are scalar matrices of determinant $1$, that is, $\langle\zeta_n I_n\rangle$.

Since $\Gamma_\fin\leq \Gamma$, we clearly have $\text{C}(\Gamma)\leq \text{C}(\Gamma_\fin)$. For the converse, consider $B\in \text{C}(\Gamma_\fin)$. If $L\in\Gamma$, then the previous paragraph shows that  $L=\sigma^{-1} L'$ where $\sigma^{-1} I_n\in \ker\pi$ and $L'\in \Gamma_\fin$. Since $B\in\text{C}(\Gamma_\fin)$, we have  $BL=B\sigma^{-1}L'=\sigma^{-1}BL'=\sigma^{-1}L'B=LB$, that is, $B\in\text{C}(\Gamma)$. Thus, $\text{C}(\Gamma)=\text{C}(\Gamma_\fin)$, as claimed.
\end{proof}

In early work on group representations, Schur studied the following covering problem: given a projective representation $\rho\colon G \rightarrow \mathrm{PGL}_{n}(\mathbb{C})$, construct a group $\hat{G}$ with  representation $\hat{\rho}\colon \hat{G} \rightarrow \mathrm{GL}_{n}(\mathbb{C})$ such that the natural projection of $\mathrm{GL}_{n}(\mathbb{C}) \rightarrow \mathrm{PGL}_{n}(\mathbb{C})$ induces a surjective homomorphism $\pi\colon \hat{G} \rightarrow G$. Recall that a \textit{stem extension} of a finite group~$G$ is a group $S$ containing a central subgroup $L \leq Z(S) \cap S'$ such that $S/L \cong G$.  To solve this problem, Schur introduced what is now known as the \textit{Schur multiplier} of $G$, which is a group isomorphic to  $H^{2}(G, \mathbb{C}^{\ast})$. A \textit{Schur cover} of $G$ is a stem extension of $G$ by its Schur multiplier. A Schur cover is not generally unique up to isomorphism, but this is so if $G$ is perfect (i.e.\ if $G=G'$), see Aschbacher \cite[Section 33]{Aschbacher}. The next result shows that, under suitable hypotheses, $\Gamma$ is determined by a representation of a Schur cover of $G$.

\begin{prop}\label{thm:Schur}
  With the notation of Proposition \ref{prop:GammaStructure}, suppose that $G$ and $\Gamma_\fin$ are perfect. Let $\hat{G}$ be a Schur cover of $G$, let $H\leq G$ be a point stabiliser, and let $\hat{H}\leq \hat{G}$ be the full preimage of $H$ under the projection $\hat{G}\to G$. Then $\Gamma_\fin=\rho(\hat{G})$ for some representation $\rho$ induced from a linear character of $\hat{H}$.
\end{prop}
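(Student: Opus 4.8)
The plan is to realise $\Gamma_\fin$ as a homomorphic image of a Schur cover of $G$ and then to recognise the accompanying matrix representation as a transitive monomial representation, which is automatically induced from a linear character of its point stabiliser. To begin, Proposition~\ref{prop:GammaStructure} provides the central extension $1 \to \langle \zeta_n I_n\rangle \to \Gamma_\fin \xrightarrow{\pi} G \to 1$. Since $\Gamma_\fin$ is perfect we have $\langle \zeta_n I_n\rangle \leq \Gamma_\fin = \Gamma_\fin'$, so this is in fact a stem extension of $G$. Because $G$ is perfect, its Schur cover $\hat G$ is unique up to isomorphism and agrees with the universal central extension of $G$, see \cite[Section~33]{Aschbacher}; its universal property yields a homomorphism $f \colon \hat G \to \Gamma_\fin$ compatible with the projections to $G$, meaning $\pi \circ f = q$, where $q\colon \hat G \to G$ denotes the covering map.

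The next step is to check that $f$ is surjective, and this is exactly where perfectness of $\Gamma_\fin$ is used. Since $q$ is onto, the image $f(\hat G)$ projects onto $G$ under $\pi$, whence $f(\hat G)\langle \zeta_n I_n\rangle = \Gamma_\fin$. As $\langle \zeta_n I_n\rangle$ is central, it contributes nothing to commutators, so using that both groups are perfect we obtain
\[\Gamma_\fin = \Gamma_\fin' = [\,f(\hat G), f(\hat G)\,] = f(\hat G') = f(\hat G).\]
Composing $f$ with the inclusion $\Gamma_\fin \hookrightarrow \mathrm{GL}_n(\mathbb{C})$ gives a representation $\rho$ of $\hat G$ with $\rho(\hat G) = \Gamma_\fin$. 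The matrices in $\Gamma_\fin$ are monomial and their associated permutation image is $\pi(\Gamma_\fin) = G$, which is transitive by hypothesis; hence $\rho$ is a transitive monomial representation of $\hat G$. Its associated permutation representation is $\pi \circ \rho = \pi \circ f = q$ followed by the action of $G$ on the $n$ points, so the stabiliser of a point is the full $q$-preimage of the point stabiliser $H$, namely $\hat H$. Invoking the standard fact that every transitive monomial representation of degree $n$ is induced from a linear character of its (index-$n$) point stabiliser (see \cite[Section~43, Exercise~1]{CurtisReiner1} and Proposition~\ref{monrep}), I conclude that $\rho = \chi \myuparrow_{\hat H}^{\hat G}$ for some linear character $\chi$ of $\hat H$, giving $\Gamma_\fin = \rho(\hat G)$ as claimed.

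I expect the main obstacle to be bookkeeping rather than depth: the two delicate points are the surjectivity of $f$, which genuinely relies on $\Gamma_\fin$ being perfect (and would fail otherwise), and the identification of the point stabiliser of $\rho$ with $\hat H$ rather than with some abstract index-$n$ subgroup. Both rest on the compatibility $\pi \circ f = q$ with the covering map, so making that identification precise---together with confirming $|\hat G : \hat H| = |G : H| = n$ since $\ker q$ is central---is the crux of the argument.
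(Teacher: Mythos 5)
Your argument is correct and follows essentially the same route as the paper: both use that a perfect $G$ has a universal Schur cover $\hat G$, obtain a map $\hat G\to\Gamma_\fin$ over $G$ whose surjectivity rests on the perfectness of $\Gamma_\fin$ (you spell out the commutator computation that the paper delegates to Aschbacher's (33.8)), identify the point stabiliser of the resulting transitive monomial representation with the preimage $\hat H$, and invoke the Curtis--Reiner fact that such a representation is induced from a linear character of that stabiliser. No gaps; the extra detail you supply on surjectivity and on $|\hat G:\hat H|=n$ is exactly the content the paper compresses into citations.
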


\begin{proof}
Since $G$ is perfect, the Schur cover $\hat{G}$ is finite, perfect,  unique up to isomorphism, and \emph{universal}, in the sense that the natural projection $\hat{G}\to G$ factors through any other central extension of $G$, \cite[(33.1)--(33.4),(33.10)]{Aschbacher}. We saw in Proposition \ref{prop:GammaStructure} that $\Gamma_\fin$ is a central extension of $G$ by a cyclic scalar subgroup $\langle \zeta_n I_n\rangle$. By assumption, $\Gamma_\fin$ is perfect, which implies that $\Gamma_\fin$ is an epimorphic image of $\hat{G}$, see \cite[(33.8)]{Aschbacher}, say with epimorphism $\psi\colon\hat{G}\to\Gamma_\fin$. Since $G$ is by hypothesis a transitive permutation group, its permutation representation is induced from the trivial character of a point stabiliser $H$. Let $L\leq \hat{G}$ be the central subgroup such that $\hat{G}/L\cong G$. By construction, $L\leq \hat{H}$ and $\hat{H}/L\cong H$. Since $L$ is central, it follows that the permutation action of $G$ on left costs of $H$ coincides with the permutation action of $\hat{G}$ on left cosets of $\hat{H}$. Together with \cite[Section 43, Exercise 1]{CurtisReiner1}, this implies that  an $n$-dimensional monomial representation $\rho$ of $\hat{G}$ satisfies  $\pi\circ\rho(\hat{G}) = \pi(\Gamma_\fin)$ if and only if $\rho$ is induced from $\hat{H}$. This holds in particular for the epimorphism $\psi\colon\hat{G}\to\Gamma_\fin$, as claimed.
\end{proof}

The conditions of Proposition \ref{thm:Schur} can be relaxed somewhat. If $G$ is not perfect, a universal central extension does not exist, and a Schur cover is no longer unique up to isomorphism. Without assuming a perfect extension, the possibilities for $\Gamma_\fin$ are classified by the group of $2$-cocycles $Z^{2}(G, \langle\zeta_nI_n\rangle)$. Computational techniques are known for working with matrices developed from cocycles \cite{deLauneyFlannery}, but in the remainder of this paper we introduce a technique using Gr\"obner bases to build complex Hadamard matrices. 

We now provide an example that illustrates how the Schur multiplier arises naturally in the construction of the Paley Hadamard matrices.  Recall that the centraliser algebra of any $2$-transitive permutation matrix group of degree $n$ is $2$-dimensional, so it is spanned by $I_{n}$ and $J_{n}-I_{n}$, where $J_n$ is the all-$1$s matrix. If $M=\alpha I_{n} + \beta J_{n}$ is a Hadamard matrix for complex $\alpha, \beta$ of norm $1$, then $\alpha\alpha^*+(n-1)\beta\beta^*=n$ and $\alpha\beta^*+\beta\alpha^*+(n-2)\beta\beta^*=0$, which implies $n=\alpha\alpha^*-\alpha\beta^*-\beta\alpha^*+\beta\beta^*$, and therefore $n\leq 4$. The action of $\textrm{PSL}_{2}(q)$ on $q+1$ points is $2$-transitive, which implies that for $q>3$ there is no complex Hadamard matrix in the centraliser algebra of the $(q+1)\times (q+1)$ permutation matrix group $\text{PSL}_2(q)$. In contrast to this, the next example shows that $\textrm{SL}_{2}(q)$, considered as a suitable monomial cover of $\textrm{PSL}_{2}(q)$, admits a Hadamard matrix in its centraliser algebra when $q\equiv 3\bmod 4$.

\begin{example}[Paley I Hadamard matrices]\label{ex:PaleyMatrix}
Let $q\equiv 3\bmod 4$ be a prime power and consider $G = \textrm{PSL}_{2}(q)$ as a $2$-transitive permutation matrix group of degree $q+1$. The Schur cover of $G$ is isomorphic to $\textrm{SL}_{2}(q)$ for all $q > 3$, see \cite[Theorem~7.1.1]{Karp}. Write $\mathbb{F}_q$ for the finite field with $q$ elements, and let
\[ H = \left\{ \begin{pmatrix} a & b \\ 0 & a^{-1} \end{pmatrix}  \mid a , b \in \mathbb{F}_{q}, a \neq 0\right\}\,\]be the stabiliser in $\textrm{SL}_2(q)$ of a $1$-dimensional subspace. Let $\chi$ be the quadratic character of $\mathbb{F}_q$, that is, $\chi(a)=1$ if $a\in \mathbb{F}_q$ is a nonzero quadratic residue, $\chi(a)=-1$ if $a\in \mathbb{F}_q$ is a nonzero quadratic non-residue, and $\chi(0)=0$. By abuse of notation define \[\chi \begin{pmatrix} a & b \\ 0 & a^{-1} \end{pmatrix} = \chi(a);\] this is easily seen to be a character of $H$. The induced representation $\rho$ of $\mathrm{SL}_{2}(q)$ has a centraliser algebra of rank $2$. We now show that this centraliser algebra contains the $(q+1)\times (q+1)$ Paley I Hadamard matrix which is defined as
\[P=\begin{pmatrix} 1 & 1 \\ -1  &Q+I \end{pmatrix}\quad\text{where}\quad Q=\begin{pmatrix} \chi(i-j)\end{pmatrix}_{i,j\in {\mathbb{F}_q}}.\]
We start by showing that \[ T = \left\{ \begin{pmatrix} 1 & 0 \\ 0 & 1 \end{pmatrix} \right\} \cup
\left\{ t_x=\begin{pmatrix} 0 & -1 \\ 1 & x \end{pmatrix} \mid x \in \mathbb{F}_{q}\right\} \,,\]
is a transversal to $H$ in $\textrm{SL}_2(q)$; this follows because every $m=\begin{pmatrix} a& b \\ c & d \end{pmatrix}$ with $c\ne 0$ can be written uniquely as $m=ht$ where
\[h=\begin{pmatrix} ac^{-1}d - b & a \\ 0 & c \end{pmatrix}\in H \quad\text{and}\quad t=t_{c^{-1}d}\in T.\]

We now want to construct a matrix $M$ with entries $m(s,t)$, $s,t\in T$, that lies in the centraliser algebra of $\rho$ and equals $P$.  Proposition \ref{CentMat} shows that we require $m(\myT(g),\myT(tg))=m(1,t)\chi_{\myH}(g)^{-1}\chi_{\myH}(tg)$ for every $g\in G$ and $t\in T$; here we write $1\in T$ for the identity matrix. First we consider  $g=h\in H$. If $t\in T$, then
\[m(1,\myT(th))= m(\myT(h), \myT(th)) = m(1, t) \chi_{\myH}(h)^{-1} \chi_{\myH}(th)\,. \]
If  $x\in\mathbb{F}_q$ and $h=\begin{pmatrix} a & b \\ 0 & a^{-1} \end{pmatrix}\in H$, then \[ t_xh = \begin{pmatrix} a^{-1} & 0 \\ 0 & a \end{pmatrix} \begin{pmatrix} 0 & -1 \\ 1 & a^{-1}(b+xa^{-1}) \end{pmatrix},\]
which implies that  $\chi_{\myH}(t_xh) =\chi(a^{-1})$; therefore $\chi_{\myH}(h)^{-1} \chi_{\myH}(t_xh)=\chi(a^{-1})^2=1$. If we define $m(1,t_x)=1$ for all $x\in \mathbb{F}_q$, then it follows that the first row of $M$ only has entries $1$. To see that $m(t_i,1)=-1$ for $i\in\mathbb{F}_q$, choose $g=t_i$ and $t=t_0$; then $tg\in H$, and since $p\equiv 3\bmod 4$, we obtain
\[m(t_i,1)=m(\myT(g),\myT(tg))=m(1,t_0)\chi_{\myH}(t_i)^{-1}\chi_{\myH}(tg)=\chi(tg)=\chi(-1)=-1.\]

Next, we show the bottom right $q\times q$ block of $P$ is $Q=(\chi(i-j))_{i,j\in\mathbb{F}_q}$. For this let $i,j\in\mathbb{F}_q$ be distinct and consider \[g = \begin{pmatrix} (i-j)^{-1} & j(i-j)^{-1}) \\ 1 & i \end{pmatrix}\in G.\]  The  factorisations
\[ g = \begin{pmatrix}1 & (i-j)^{-1} \\ 0 & 1 \end{pmatrix} \begin{pmatrix} 0 & -1 \\ 1 & i\end{pmatrix}\quad\text{and}\quad t_0g = \begin{pmatrix}i-j & -1 \\ 0 & (i-j)^{-1} \end{pmatrix} \begin{pmatrix} 0 & -1 \\ 1 & j\end{pmatrix}\,\]
show $m(t_i,t_j)=m(\myT(g), \myT(t_0g)) =m(1,t_0) \chi_{\myH}(g)^{-1} \chi_{\myH}(t_0g) = \chi(i-j)$, as claimed.
\end{example}

In the previous example, we explicitly computed a basis $\{I_{q+1}, M\}$ for the centraliser algebra of $\textrm{SL}_{2}(q)$ acting as a monomial group on $q+1$ points. It is also possible to apply the method of Section \ref{sec:cha}. One finds that the character table is equal to
\[ T=\begin{pmatrix} 1 & g \\ 1 & -g \end{pmatrix}\quad\text{where}\quad g=\sum_{j \in \mathbb{F}_{q}^{\ast}} \chi(j)e^{2\pi\imath \text{Tr}(j)/p}.\]
is a Gauss sum (see \cite[Section 6.3]{IrelandRosen}), where $\chi$ is the quadratic character on $\mathbb{F}_{q}^{\ast}$ and $\text{Tr}$ is the field trace to the prime field. When $q \equiv 3 \bmod 4$ the sum is evaluated as $g=\imath \sqrt{q}$, see \cite[Proposition~6.3.2]{IrelandRosen}. Hence, $(g + 1)(g^{\ast} + 1) = q+1$, and the existence of the Paley I matrices is immediate from Lemma \ref{lem:chmnorm}. When $q \equiv 1 \bmod 4$, the Gauss sum $g=\sqrt{q}$ is real and occus as an eigenvalue of the symmetric matrix 
\[ M = \begin{pmatrix} 0 & \textbf{1} \\ \textbf{1} & M_{2} - M_{3} \end{pmatrix} \]
where $M_{2}$ and $M_{3}$ are as in Example \ref{PaleyGraph}. A similar computation shows that $M_q=I_{q+1} + \imath M$ is complex Hadamard. These matrices are closely related to the Paley II (real) Hadamard matrices of order $2q+2$; for a fuller discussion see \cite{mypaper-morphisms}. 

When $q \equiv 1 \bmod 4$, the group $\textrm{SAut}(M_{q})$ contains both a monomial cover of $\textrm{SL}_{2}(q)$ and the scalar subgroup $\langle (\imath I_{q+1},\imath I_{q+1})\rangle$, see \cite[Section 17.2]{deLauneyFlannery}. Since $q+1 \equiv 2 \bmod 4$, the determinant of $\imath I_{q+1}$ is $-1$. Since $\textrm{SL}_{2}(q)$ is perfect, it follows from the definition of the Schur multiplier that the monomial preimage of $\textrm{SL}_{2}(q)$ in $\Gamma$ is contained in the commutator subgroup $\Gamma'$, and hence cannot contain $\pm \imath I_{q+1}$. Thus the automorphism group contains a subgroup isomorphic to a central product of $C_{4}$ and $\textrm{SL}_{2}(q)$, intersecting in a cyclic subgroup of order $2$. In fact, the full automorphism group is obtained by allowing field automorphisms to act entrywise, see \cite[Section 17.2]{deLauneyFlannery}.

We have shown that if $M$ is a complex Hadamard matrix with $\pi_{1}(\textrm{SAut}(M)) = \Gamma$, then $M\in \mathrm{C}(\Gamma)$. The next example shows that this conclusion no longer holds when the strong automorphism group is replaced with the (ordinary) automorphism group.
It may happen that $\Gamma_{1} = \pi_{1}(\textrm{Aut}(M))$ and $\Gamma_{2} = \pi_{2}(\textrm{Aut}(M))$ are induced from non-conjugate subgroups, in which case $M$ belongs to the \textit{intertwiner} of distinct representations rather than a centraliser algebra. This is illustrated in the next example.

\begin{example}\label{Sylvester}
 The Sylvester Hadamard matrix of order $2^{n}$ can be defined as $S_{n} = [(-1)^{x^{\intercal}y}]_{x,y \in \mathbb{F}_{2}^{n}}$ where $\mathbb{F}_2^n$ denotes the space of $n$-dimensional column vectors over $\{0,1\}$.   Moreover, for $n \geq 2$ it is known that
\begin{equation}\label{AutSn}
\Aut(S_{n}) \cong Z(\Aut(S_{n})) \times (C_{2}^{n} \rtimes \mathrm{AGL}_{n}(2)),
\end{equation}
where $C_2^n$ is the $n$-fold direct product of the cyclic group of size $2$ and $ Z(\Aut(S_{n})) = \langle (-I_{2^n},-I_{2^n}) \rangle$, see \cite[Theorem 9.2.4]{deLauneyFlannery}. For $x,y\in \mathbb{F}_2^n$ denote by $r_{x}$ and $c_{y}$ the row and column of $S_n$ labelled by $x$ and $y$, respectively. The action of $\mathrm{AGL}_{n}(2)$ on rows and columns is described in detail in \cite{mypaper-explicitmorphisms}: if $(v,A)\in \mathrm{AGL}_{n}(2)$ is the transformation $x\mapsto Ax+v$, then
\[
r_{x}\cdot(v,A) = r_{Ax + v} \quad \text{and} \quad c_{y}\cdot(v,A) = (-1)^{v^{\intercal} (A^{-1})^\intercal y}c_{(A^{-1})^{\intercal}y}.
\]
Observe that the action on rows is a $2$-transitive permutation action, whereas the column $c_{0}$ is stabilised. Let $V \leq \textrm{AGL}_{n}(2)$ be the subgroup of translations, and observe that $\pi_{1}(V)$ is a regular permutation group while $\pi_{2}(V)$ is trivial. Hence, $S_{n}$ does not belong to the centraliser of $\textrm{AGL}_{n}(2)$, although it \emph{does} admit an action of a $2$-transitive permutation group. In fact, the stabiliser of a row in $\Aut(S_{n})$ is not conjugate to the stabiliser of a column (the projections onto $\textrm{GL}_{n}(2)$ are the stabiliser of a point and of a hyperplane, respectively). Hence the actions on rows and columns are linearly equivalent, but not monomially equivalent, and $S_{n}$ belongs to the intertwiner of these representations, but not to the centraliser algebra of either of these representations. Specifically, $\pi_{1}(\Aut(S_{n}))$ and $\pi_{2}(\Aut(S_{n}))$ are equivalent as monomial representations,  but the permutation representations $\pi(\pi_{1}(\Aut(S_{n})))$ and  $\pi(\pi_{2}(\Aut(S_{n})))$ are inequivalent as the former is the the regular representation of $V$ while the latter  is trivial.
\end{example}

A similar phenomenon where inequivalent permutation representations lift to equivalent monomial representations can be used to construct complex Hadamard matrices of orders $6$ and $12$ related to the outer automorphisms of $\textrm{Sym}_{6}$ and the Mathieu group $M_{12}$ respectively, \cite{mypaper-S6, HallM12}.

\section{Complex Hadamard matrices admitting a low rank automorphism group}\label{Sec:comp}

In Example \ref{ex:PaleyMatrix}, we saw that for $q \equiv 3 \bmod 4$ the group $\textrm{SL}_{2}(q)$ is isomorphic to a subgroup of the strong automorphism group of the $(q+1)\times (q+1)$ Paley type I matrix. It is natural to ask which other low-rank permutation groups act as the group of strong automorphisms of a complex Hadamard matrix: we recall results in this direction by Moorhouse and Chan, and then describe the results of a computer classification. Detailed information about the $2$-transitive permutation groups suffices to carry out this programme.

\begin{theorem}[Moorhouse,\!\cite{Moorhouse}]\label{thm:Moorhouse}
Suppose that $M$ is a complex Hadamard matrix, and that $G$ is a $2$-transitive permutation group contained in $\pi \circ\pi_{1}(\Aut(M))$. Then one of the following occurs.
\begin{enumerate}
    \item[\rm (1)] $G \cong \mathrm{AGL}_{n}(p)$ in its natural action on $p^{n}$ points, and $M$ is a generalised Sylvester matrix (the character table of an elementary abelian $p$-group),
    \item[\rm (2)] $G \cong \mathrm{PSL}_{2}(q)$ acting on $q+1$ points, and $M$ is a Paley matrix of order $q+1$ that is real for $q \equiv 3 \bmod 4$ and over $4^{\rm th}$ roots of unity for  $q \equiv 1 \bmod 4$,
    \item[\rm (3)] $G \cong \mathrm{Sp}_{2d}(q)$ where $q$ is a power of $2$ and $q^{2d} \geq 16$, and $M$ is of order $q^{2d}$,
    \item[\rm (4)] $G$ is isomorphic to one of ${\rm Alt}_{6}, M_{12}, {\rm P}\Sigma{\rm L}_{2}(8)$ or ${\rm Sp}_{6}(2)$; and $M$ is of order $6$, $12$, $28$ or $36$ respectively.
\end{enumerate}
\end{theorem}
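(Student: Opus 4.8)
The plan is to combine the centraliser-algebra machinery of Sections \ref{secmon} and \ref{sec:cha} with the classification of finite $2$-transitive permutation groups, which rests on the classification of finite simple groups. First I would pass from $\Aut(M)$ to a finite cover of its image: arguing as in Proposition \ref{prop:GammaStructure}, the projection $\pi_1(\Aut(M))$ contains a finite monomial group whose permutation image contains $G$, and by \cite[Section 43, Exercise 1]{CurtisReiner1} the monomial representations $\rho_1=\pi_1$ and $\rho_2=\pi_2$ are induced from linear characters of the preimage of a point stabiliser in a central extension of that image (Proposition \ref{thm:Schur} in the perfect case; in general the extension is governed by a cocycle in $Z^2(G,\langle\zeta_nI_n\rangle)$). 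Since $G\leq \pi\circ\pi_1(\Aut(M))$ is $2$-transitive, the overgroup $\pi\circ\pi_1(\Aut(M))$ is itself $2$-transitive and hence of rank $2$. As $\rho_1\cong\rho_2$ with $\rho_1(g)M=M\rho_2(g)$, the matrix $M$ lies in the intertwiner of $\rho_1$ and $\rho_2$, whose dimension equals that of the centraliser algebra $\text{C}(\rho_1)$; by Theorem \ref{thm:basCentAlg} this dimension is the number of orientable orbitals, and so is at most $2$.

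Next I would dispose of the degenerate case and extract the decisive arithmetic condition. If the unique off-diagonal orbital is non-orientable, then by the remark following the definition of orientability every matrix in $\text{C}(\rho_1)$ vanishes off the diagonal, so $M$ would be scalar; this contradicts Lemma \ref{lem:chmnorm}, since a complex Hadamard matrix has all entries nonzero. Hence the off-diagonal orbital is orientable, the intertwiner is exactly $2$-dimensional, and $M=\alpha I_n+\beta A$ for a unit $\beta$ and an orbital matrix $A$ with unit-modulus nonzero entries. By Proposition \ref{prop_CT} the character table is a $2\times2$ matrix whose nontrivial column records the two eigenvalues of $A$, each a character sum over the off-diagonal double coset. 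Applying Lemma \ref{lem:chmnorm}, the entry conditions force $|\alpha|=|\beta|=1$, while the eigenvalue conditions collapse to a modulus-and-argument condition on this character sum: concretely, as computed in Example \ref{ex:PaleyMatrix}, the sum $g$ must satisfy $|g|^2=n-1$ with the appropriate phase, so that $(1+g)(1+g^\ast)=n$. The character sum is thus required to be a Gauss sum of the correct absolute value.

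The substantive part is then a case analysis over the $2$-transitive groups. For each affine and almost simple family I would compute the Schur multiplier, hence the admissible covers and the linear characters $\chi$ of the point stabiliser rendering the off-diagonal orbital orientable (by the criterion of Proposition \ref{prop:isorbital}), and then evaluate the associated character sum via Proposition \ref{prop_CT} and test the modulus condition. The affine groups $\mathrm{AGL}_n(p)$ produce the characters of an elementary abelian $p$-group and hence the generalised Sylvester matrices of case~(1) (this is the instance, illustrated in Example \ref{Sylvester}, where the row and column actions need not be monomially equivalent, so $M$ genuinely lies in an intertwiner rather than a centraliser); the groups $\mathrm{PSL}_2(q)$ produce quadratic Gauss sums over $\mathbb{F}_q$ and hence the Paley matrices of case~(2), exactly as in Example \ref{ex:PaleyMatrix}; the symplectic groups $\mathrm{Sp}_{2d}(q)$ in even characteristic produce Gauss sums attached to quadratic forms through the Weil representation, giving case~(3); and the four exceptional actions of $\mathrm{Alt}_6$, $M_{12}$, ${\rm P}\Sigma{\rm L}_2(8)$ and $\mathrm{Sp}_6(2)$ are settled by explicit computation with their character tables, giving case~(4).

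The main obstacle is precisely this exhaustive evaluation of character sums across all $2$-transitive families, together with the elimination of every remaining candidate. The decisive numerical constraint is that the off-diagonal character sum have absolute value $\sqrt{n-1}$ with the correct argument; verifying that this holds for exactly the four listed families, and fails for all other $2$-transitive groups---the remaining Mathieu groups, the Suzuki and Ree groups, the unitary groups $\mathrm{PSU}_3(q)$, and the higher-dimensional linear groups---is where the bulk of the work lies. The Gauss sums attached to the Weil representation in the symplectic case present the principal technical difficulty, since there the sum is governed by the number of zeros of a quadratic form rather than by a one-variable Gauss sum.
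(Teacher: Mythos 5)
The paper does not prove this statement: it is quoted, with attribution, from Moorhouse \cite{Moorhouse}, so there is no internal proof to compare yours against. Judged on its own terms, your overall strategy --- reduce to a rank-$2$ centraliser or intertwiner, extract a norm condition on a character sum, then run the classification of finite $2$-transitive groups --- has the right general shape, and your handling of the centraliser case is consistent with what the paper itself does for $\mathrm{PSL}_2(q)$ in Example \ref{ex:PaleyMatrix} and the discussion preceding it.

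There is, however, a genuine gap, and the paper flags it in the paragraph immediately after the theorem. Moorhouse's hypothesis concerns $\Aut(M)$, not $\mathrm{SAut}(M)$: the row and column representations $\pi_1$ and $\pi_2$ are linearly equivalent (conjugate by the invertible matrix $M$) but need not be monomially equivalent, i.e.\ they may be induced from non-conjugate subgroups. In that situation $M$ lies in an intertwiner that is not a centraliser algebra, there is no distinguished basis element $I_n$, and your normal form $M=\alpha I_n+\beta A$ together with the eigenvalue computation via Proposition \ref{prop_CT} does not apply: that proposition computes the character table of a \emph{centraliser} algebra, and the basis matrices of an intertwiner need not split into a diagonal and an off-diagonal part. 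Example \ref{Sylvester} is exactly this situation and it produces case (1) of the theorem, so the case your second paragraph cannot handle is not an edge case but one of the four conclusions; you acknowledge the distinction in passing in your third paragraph but never supply the replacement argument. Secondarily, your ``decisive numerical constraint'' $|g|^2=n-1$ with $(1+g)(1+g^{\ast})=n$ presupposes that the induced character has exactly two constituents of equal degree, as for $\mathrm{PSL}_2(q)$; in general the two eigenvalues $g_1,g_2$ of the orbital matrix occur with multiplicities $m_1+m_2=n$ subject to $m_1g_1+m_2g_2=0$ and $m_1|g_1|^2+m_2|g_2|^2=n(n-1)$, so the condition to be tested family-by-family is more delicate than a single Gauss-sum modulus. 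Finally, the exhaustive elimination of the remaining $2$-transitive families --- which you correctly identify as the bulk of the work --- is only gestured at, so what you have is a programme rather than a proof.
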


We emphasise that Moorhouse studies the full automorphism group rather than the group of strong automorphisms. Throughout our classification, we require that $\pi_{1}(\mathrm{Aut}(M))$ and $\pi_{2}(\mathrm{Aut}(M))$ are conjugate not only as linear representations, but as monomial representations: that is, the representations are induced from the same subgroup, or equivalently, $M$ belongs to the centraliser algebra of $\pi_{1}(\Aut(M))$. Moorhouse does not make this assumption: he allows representations induced from non-conjugate subgroups, equivalently $M$ may belong to the intertwiner of representations which are not monomially equivalent. The Sylvester matrices  are an example of this phenomenon, see Example \ref{Sylvester}.

Complex Hadamard matrices in the centraliser algebra of a strongly regular graph  have been considered by Chan and Godsil \cite{Chan12,GodsilChan}. Recall that a $(v,k,\lambda, \mu)$-strongly regular graph is a $k$-regular graphs on $v$ vertices in which any two adjacent vertices share $\lambda$ common neighbours, while any pair of non-adjacent vertices share $\mu$ neighbours. The centraliser algebra of a rank $3$ permutation group of even order is spanned by the adjacency matrix of a strongly regular graph, see \cite[Section 1.1]{BvM}. Not every strongly regular graph admits a rank $3$ group action, thus we only state the following special case of relevance to our purposes.

\begin{theorem}[Chan, Godsil, \cite{Chan12, GodsilChan}]\label{thm:Chan}
Suppose that $M$ is a complex Hadamard matrix, and that $G$ is a rank $3$ permutation group contained in $\pi \circ\pi_{1}({\rm SAut}(M))$. Then one of the following holds:
\begin{enumerate}
    \item[\rm (1)] $n = 4t^2$ for an integer $t$, and $G$ {is a group of automorphisms of} a $(4t^2,2t^2 - t, t^2 - t, t^2 - t)$-strongly regular graph (equivalently a Menon Hadamard design).
    \item[\rm (2)] $n = 4t^2 - 1$ for an integer $t$, and $G$ {is a group of automorphisms of}  a $(4t^2 -1,2t^2, t^2, t^2)$-strongly regular graph.
    \item[\rm (3)] $n = 4t^2 +4t + 1$ and $G$ {is a group of automorphisms of}  a $(4t^2 + 4t + 1,2t^2 + 2t, t^2 + t -1, t^2 + t)$-strongly regular graph; here either $t$ or $t^2+t$ is an integer.
    \item[\rm (4)] $n = 4t^2 +4t + 2$ for an integer $t$,  and $G$ {is a group of automorphisms of}  a $(4t^2 + 4t + 2,2t^2 + t, t^2 -1, t^2)$-strongly regular graph.
\end{enumerate}
\end{theorem}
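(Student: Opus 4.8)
The plan is to reduce the problem to an eigenvalue computation in the three-dimensional Bose--Mesner algebra of a strongly regular graph, exactly as in Example~\ref{PaleyGraph}, and then to solve the resulting norm equations. Since $G$ has rank~$3$ and (as noted before the theorem) even order, its two nontrivial orbitals are self-paired and define a strongly regular graph with feasible parameters $(n,k,\lambda,\mu)$; write $A$ for an adjacency matrix of this graph and $\bar A = J_n - I_n - A$ for that of its complement. Because $M \in \text{C}(\Gamma)$ for $\Gamma = \pi_1(\mathrm{SAut}(M))$ and $M$ has no zero entries, both nontrivial orbitals must be orientable, so by Theorem~\ref{thm:basCentAlg} the centraliser algebra is three-dimensional and, after a diagonal equivalence, is spanned by $I_n$, $A$, $\bar A$. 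Thus, up to Hadamard equivalence, $M = I_n + \alpha_2 A + \alpha_3 \bar A$ with $|\alpha_2| = |\alpha_3| = 1$.

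Next I would impose the Hadamard property through Lemma~\ref{lem:chmnorm}, in the form of norm conditions on the three eigenvalues of $M$. Writing $k,r,s$ for the eigenvalues of $A$ (so $r+s = \lambda-\mu$ and $rs = \mu-k$), the eigenvalues of $M$ on the three common eigenspaces are
\[ \theta_0 = 1 + \alpha_2 k + \alpha_3(n-1-k), \qquad \theta_\pm = 1 + \alpha_2 r_\pm - \alpha_3(1 + r_\pm), \]
where $r_+ = r$ and $r_- = s$; this is the system $T\underline{\alpha} = \underline{\lambda}$ of Proposition~\ref{prop_CT} for the rank~$3$ character table. Setting $p = 2\,\mathrm{Re}\,\alpha_2$, $q = 2\,\mathrm{Re}\,\alpha_3$ and $w = 2\,\mathrm{Re}(\alpha_2\bar\alpha_3)$, each condition $|\theta|^2 = n$ becomes linear in $p,q,w$ with constant term an integer polynomial in $n,k,\lambda,\mu$. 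Subtracting these conditions in pairs eliminates the constant terms (the subtraction of the two restricted equations carries a factor $r-s\neq 0$) and leaves two real linear relations expressing $p$ and $q$ as affine functions of $w$.

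The final step solves this system. Since $\alpha_2,\alpha_3$ lie on the unit circle, $p,q,w$ satisfy the identity $p^2 + q^2 + w^2 - pqw = 4$; substituting the affine expressions for $p$ and $q$ produces a single polynomial equation in $w$ whose coefficients are integer polynomials in the graph parameters. Demanding a real root $w \in [-2,2]$ with associated $p,q \in [-2,2]$, together with the standard integrality of the multiplicities of $r$ and $s$ and the identity $k(k-1-\lambda) = (n-1-k)\mu$, constrains $(n,k,\lambda,\mu)$ severely. Organised according to whether the restricted eigenvalues $r,s$ are rational, a case analysis then shows that the only feasible parameter sets are the four families~(1)--(4); the conference graphs of Example~\ref{PaleyGraph} furnish instances of family~(3).

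I expect the main obstacle to be this final case analysis: arranging the elimination so that the polynomial in $w$ factors transparently, and then verifying that the feasibility conditions exclude every parameter set outside~(1)--(4) while admitting each of them. A secondary technical point, needed at the start, is the orientability reduction that allows the phase-twisted monomial orbital matrices of $\text{C}(\Gamma)$ to be replaced by the genuine $0/1$ adjacency matrices $A$ and $\bar A$, together with the self-pairing of the orbitals (valid here since $|G|$ is even) that makes the orbital graph undirected.
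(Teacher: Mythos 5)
First, a point of comparison: the paper does not prove this statement at all --- it is quoted from Chan and Godsil \cite{Chan12, GodsilChan} as a known classification, so there is no internal proof to measure your attempt against. Your general framework (pass to the three-dimensional adjacency algebra, compute the three eigenvalues of $M=I_n+\alpha_2A+\alpha_3\bar A$, impose $|\theta_i|^2=n$ via Lemma~\ref{lem:chmnorm}, and linearise in $p=2\,\mathrm{Re}\,\alpha_2$, $q=2\,\mathrm{Re}\,\alpha_3$, $w=2\,\mathrm{Re}(\alpha_2\bar\alpha_3)$ subject to $p^2+q^2+w^2-pqw=4$) is indeed the standard route and is consistent with the paper's machinery (Proposition~\ref{prop_CT} and the discussion around \eqref{eq_ct}). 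However, as a proof the proposal has a genuine gap at its centre: the entire content of the theorem is the derivation of the four parameter families, and this is exactly the step you defer to an unexecuted ``case analysis.'' Asserting that the feasibility conditions ``exclude every parameter set outside (1)--(4)'' is a restatement of the theorem, not an argument; nothing in the proposal shows, for instance, why $n$ must take one of the four quadratic shapes listed. A secondary inaccuracy: subtracting the norm equations in pairs does not eliminate the constant terms (they involve $k^2+(n-1-k)^2$ versus $r^2+(1+r)^2$, which differ), and you should justify why the resulting $3\times 3$ linear system in $(p,q,w)$ is treated as rank $2$ rather than as determining $(p,q,w)$ uniquely; the honest statement is that the system plus the identity $p^2+q^2+w^2-pqw=4$ and the box constraints is overdetermined, and the solvability condition is what restricts $(n,k,\lambda,\mu)$.

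The opening reduction also needs care. Your claim that, because both nontrivial orbitals are orientable, the centraliser algebra of $\Gamma=\pi_1(\mathrm{SAut}(M))$ is ``after a diagonal equivalence'' spanned by $I_n$, $A$, $\bar A$ is false in general. Theorem~\ref{thm:basCentAlg} gives a basis of \emph{twisted} orbital matrices whose nonzero entries carry the phases $\myU(g)^{-1}\myU(tg)$, and these phases need not be removable by diagonal conjugation: Example~\ref{ex:PaleyMatrix} and the rows of Table~\ref{tab1} with $d>1$ (e.g.\ the degree-$10$ cover of $\mathrm{Alt}_5$ with $d=2$, whose solutions differ from those of the $d=1$ permutation case) are explicit counterexamples. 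The statement as the paper uses it is implicitly restricted to the permutation (Bose--Mesner) setting with self-paired orbitals --- the paper says so immediately after the theorem --- and your proof should either make that restriction explicit from the outset or supply an argument for why the monomial twisting can be trivialised, which it cannot in general.
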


Recall that the subdegrees of a transitive permutation group are the lengths of the orbits of a point stabiliser. These subdegrees are given in the row of the character table of the centraliser algebra corresponding to the trivial irreducible character. In fact, for the groups related to strongly regular graphs considered above, it follows from \cite[Section 1.1.4]{BvM} that the  subdegrees determine all remaining entries of the character table. Thus, Theorem~\ref{thm:Chan} gives a condition on the subdegrees of a rank $3$ permutation matrix group $G$ which is necessary and sufficient for $\text{C}(G)$ to contain a complex Hadamard matrix. A classification of rank 3 permutation groups, including their subdegrees is available in the literature, see for example\cite{LiebeckSaxlRank3}. Thus, while we do not give the classification explicitly here, the classification is in principle known. Chan's classification applies only to rank $3$ permutation matrix groups in which all orbitals are self-paired (that is,  the centraliser algebra has a basis of symmetric matrices). This omits, for example, the Frobenius groups of order $\binom{p}{2}$ where $p \equiv 3 \bmod 4$; such groups have been considered previously Munemasa and Watatani, and independently by Nu\~nez Ponasso, \cite{MunemasaWatatani, PonassoThesis}.

Apart from these results, the literature on classifying complex Hadamard matrices by their automorphism groups is rather sparse. We note substantial work on the closely related problem of classifying complex Hadamard matrices in association schemes by Ikuta and Munemasa, see e.g. \cite{IkutaMunemasa1, IkutaMunemasa2} and references therein. It appears that there is only a single real Hadamard matrix in the literature which admits a primitive-but-not-2-transitive automorphism group. This matrix has  order $144$ and was described by Marshall Hall in \cite{HallHadamard}. To our knowledge, monomial covers of rank $3$ permutation groups have not been investigated, nor have groups of higher rank.

\subsection{Computational classification results}

We conclude this paper with some computational results, building on the theory developed thus far. Given a transitive permutation group $G\leqslant \text{Sym}(\Omega)$ and point stabiliser $H=G_\omega$ (with $\omega\in \Omega$), our algorithm proceeds as follows.
\begin{enumerate}
\item Construct a Schur cover $\hat{G}$ for $G$, and compute the full preimage $\hat{H}\leq \hat{G}$ of $H\leq G$.
\item For each linear character $\chi$ of $\hat{H}$, compute the character of the induced representation $\rho =\chi\myuparrow_{\hat{H}}^{{\hat{G}}}$, and compute the character table $T=\textrm{CT}(\rho)$ of its centraliser algebra  via Proposition \ref{prop_CT}. Denote by $\mathbb{K}$ the field of definition of $T$, and by $r$ the number of rows in $T$.
\item Proceed as in Example \ref{ex:Grobner4}: Define $K = \mathbb{K}[\alpha_{1}, \alpha_{1c}, \alpha_{2},\alpha_{2c}, \ldots, \alpha_r,\alpha_{rc}]$ and construct a Gr\"obner basis of the ideal $\mathcal{I}$ generated by the polynomials that encode the norm conditions for $\alpha_1,\ldots,\alpha_r$ to define a complex Hadamard matrix.  The result is an ideal (defined over $\mathbb{K}$) in which variables are eliminated according to a monomial ordering; due to the structure of our original polynomial equations, there exists a polynomial in the Gr\"obner basis that expresses one of the variables  in terms of a univariate polynomial.
\item Solve for the roots of a univariate polynomial in the Gr\"obner basis; for each solution, substitute the values in the remaining polynomials and then iterate this process. This way it is possible to find all points in the variety.
\item If $(\alpha_1,\alpha_{1c},\ldots,\alpha_r,\alpha_{rc})$ is one of the points in the variety, then this defines a complex Hadamard matrix provided that $\alpha_{ic}$ is the complex conjugate of $\alpha_i$ for all $i$; the latter test is still necessary because it does not follow from the imposed condition $\alpha_i\alpha_{ci}=1$. Once this is verified, the resulting complex Hadamard matrix may be constructed explicitly via Proposition~\ref{CentMat}.
\end{enumerate}

If $G$ and $\Gamma_{f}$ are perfect, then, by Proposition \ref{thm:Schur}, the above algorithm produces all complex Hadamard matrices such that $\pi\circ\pi_{1}({\rm SAut}(M)) \cong G$. When these hypotheses do not hold, the algorithm produces matrices, but without a guarantee of completeness. This procedure requires some heavy machinery: computation of Schur multipliers is a notorious problem in group theory, naive implementations of Proposition \ref{prop_CT} require an iteration over all elements of $G$, and the complexity of computing a Gr\"obner basis is well-known to be doubly exponential in the number of variables. Nevertheless, the algorithm seems practical for permutation groups of order $\leq 10^8$ and of rank $\leq 5$. For the convenience of the reader we illustrate this approach with an explicit example; this matrix has been described by Nuñez Ponasso and by Munemasa-Watani, \cite{MunemasaWatatani, PonassoThesis}.

\begin{example}
There is, up to conjugacy, a unique group $G=C_7\rtimes C_3$ that acts transitively on $7$ points, with point stabiliser $H\cong C_3$. Let $\chi$ be a non-trivial character of $H$, with induced monomial representation $\rho$. Let $\omega=\zeta_3$ be a primitive $3^{\mathrm{rd}}$ root of unity. The centraliser $\text{C}(\rho)$ is spanned by $\{I_{7},M_1,M_2\}$ where
\[
M_{1} = \begin{pmatrix}
0 & 1 & \omega^{2} & \omega &0 & 0 & 0 \\
   0 & 0 &  \omega^{2} &0 & 0 &  \omega & \omega^{2} \\
   0 & 0 & 0 &  \omega^{2} & \omega & \omega^{2} &0 \\
   0 &  \omega^{2} &0 & 0 &  \omega^{2} &0 &  \omega \\
  1 &    1 &0 & 0 & 0 &  \omega &0 \\
    \omega^{2} &0 & 0 &1 &0 & 0 &  \omega \\
    \omega &0 &1 &0 &  \omega &0 & 0\end{pmatrix}\quad\text{and}\quad
M_{2} = \begin{pmatrix}
0 & 0 & 0 & 0 &1 & \omega & \omega^{2} \\
  1 &0 & 0 &  \omega &    1 &0 & 0 \\
    \omega & \omega &0 & 0 & 0 & 0 &1 \\
    \omega^{2} &0 &  \omega &0 & 0 &1 &0 \\
   0 & 0 &  \omega^{2} & \omega &0 & 0 &  \omega^{2} \\
   0 &  \omega^{2} & \omega &0 &  \omega^{2} &0 & 0 \\
   0 &  \omega &0 &  \omega^{2} &0 &  \omega^{2} &0\end{pmatrix}
\,.\]
These matrices are unique up to conjugation by permutation matrices and multiplication by scalars. It is not immediately obvious whether $G$ acts on a complex Hadamard matrix $M$, that is, whether $\text{C}(\rho)$ contains a complex Hadamard matrix. The latter holds if and only if there exist complex numbers $\alpha_{1}, \alpha_{2}$ of norm $1$ such that $M = I_{7} + \alpha_{1}M_{1} + \alpha_{2} M_{2}$ with  $MM^{\ast} = 7I_{7}$. The character table for $\text{C}(\rho)$ is given below, along with the linear equation corresponding to a complex Hadamard matrix.
\[ \begin{pmatrix}
1 & 1 & 1 \\
3 & \frac{-1-\imath \sqrt{7}}{2} & \frac{-1+\imath \sqrt{7}}{2} \\
3 & \frac{-1+\imath \sqrt{7}}{2} & \frac{-1-\imath \sqrt{7}}{2}
\end{pmatrix}\begin{pmatrix} 1 \\ \alpha_{1} \\ \alpha_{2} \end{pmatrix} = \begin{pmatrix} \lambda_{1} \\ \lambda_{2} \\ \lambda_{3} \end{pmatrix} \]
One solution is given by $\alpha_{1} = (-3+\imath\sqrt{7})/4$ and $\alpha_{2} = 1$; there are three more solutions, obtained by swapping $\alpha_{1}$ with $\alpha_{2}$ and taking complex conjugates. It follows that $G$ does act on complex Hadamard matrices defined over the field $\mathbb{Q}(\zeta_{21})$; the latter is the smallest cyclotomic field containing $\imath\sqrt{7}$ and $\omega$.
\end{example}

In Table \ref{tab1} we report on our findings for primitive permutations groups of degree $n\leq 15$ and of rank $3$; a database of such groups is available in MAGMA \cite{MAGMA}. The notation used in the table is as follows. We denote by $\hat{G}$ a Schur cover of $G$, and $\hat{H}$ is the preimage in $\hat{G}$ of a point stabiliser in $G$. We denote by $d$ the largest value  for which a primitive $d^{\textrm{th}}$ root of unity appears in the character (thus, a ``$1$'' indicates a permutation representation). The next column gives a minimal polynomial for the smallest field extension containing the entries of the complex Hadamard matrix. As before, $C_n$ denotes a cylic group of order $n$, and $\zeta_n$ is a  primitive $n^{\mathrm{th}}$ root of unity.

\begin{table}[h]\small
  \def\arraystretch{1.2}%
    \hspace*{-0.17cm}\begin{tabular}{| c | c | c | c | c | c | c |}
    \hline
$n$     & $G$              & $|\hat{G}|/|G|$     & $\hat{H}/\hat{H}'$   &  $d$   & \textrm{Minimal polynomial over $\mathbb{Q}(\zeta_{d})$} & subdegrees\\ \hline
$7$     & $C_{7}\rtimes C_{3}$  & $1$      & $C_{3}$                   & $d \mid 3$ & $x^{2} + \frac{3}{2}x + 1$ & $[1,3,3]$ \\
\hline
$9$     & $C_{3}^2 \rtimes C_{4}$ & $3$      & $C_{12}$                   & $d \mid 12$  & $x^2 - \frac{1}{2}x + 1$ & $[1, 4, 4]$ \\
\hline
$10$     & ${\rm Alt}_5$ & $2$      & $C_{4}$                   & $1$  & $x^{2} + \frac{1}{2}x + 1$;\quad  $x^{2} + 1$  & $[1, 3, 6]$ \\
$10$     & ${\rm Alt}_5$ & $2$      & $C_{4}$                   & $2$  & $x^{4} - 8x^{2} + 36$ & $[1, 3, 6]$ \\
\hline
$11$ & $C_{11}\rtimes C_{5}$ & $1$ & $C_{5}$ & $d \mid 5$ & $x^{2} + \frac{5}{3}x + 1$ & $[1,5,5]$ \\
\hline
$13$ & $C_{13} \rtimes C_{6}$ & $1$ & $C_{6}$ & $1$ & $x^{4} + \frac{1}{3}x^{3} + \frac{5}{3}x^{2} + \frac{1}{3}x + 1$  & $[1, 6, 6]$ \\
$13$ & $C_{13} \rtimes C_{6}$ & $1$ & $C_{6}$ & $2$ & $x^{4} - \frac{1}{3}x^{3} + \frac{5}{3}x^{2} - \frac{1}{3}x + 1$  & $[1, 6, 6]$ \\
$13$ & $C_{13} \rtimes C_{6}$ & $1$ & $C_{6}$ & $3$ & $x^{8}\!-\!\frac{1}{3}x^{7}\!-\!\frac{14}{9}x^{6}\!-\!\frac{1}{9}x^5\!+\!\frac{5}{3}x^{4}\!-\!\frac{1}{9}x^{3}\!-\!\frac{14}{9}x^2\!-\!\frac{1}{3}x\!+\!1$  & $[1, 6, 6]$ \\
$13$ & $C_{13} \rtimes C_{6}$ & $1$ & $C_{6}$ & $6$ & $x^{8}\!+\!\frac{1}{3}x^{7}\!-\!\frac{14}{9}x^{6}\!-\!\frac{1}{9}x^5\!+\!\frac{5}{3}x^{4}\!-\!\frac{1}{9}x^{3}\!-\!\frac{14}{9}x^2\!+\!\frac{1}{3}x\!+\!1$  & $[1, 6, 6]$ \\
\hline
$15$ & ${\rm Alt}_{6}$                & $6$ & $C_{6}$ & $1$ & $x^{2} + \frac{5}{3}x + 1$;\quad  $x^{2} - \frac{7}{4}x + 1$ & $[1, 6, 8]$ \\
\hline
    \end{tabular}
       \caption{Primitive groups degree $n \leq 15$, rank $3$ acting on complex Hadamard matrices.}\label{tab1}
\end{table}

The matrices in the centraliser algebra of a permutation group of rank $3$ have been previously described by Godsil, Chan and  Nu\~{n}ez Ponasso. In addition the matrix of order 11 and two solutions at order 13 have been described by Haagerup \cite{Haagerup}; all are described in the database of complex Hadamard matrices \cite{Karol,CHGuide}. We believe that the remaining matrices are new. In future work, detailed classifications and proofs of inequivalence will be presented. An online database will be maintained at \url{https://github.com/pocathain/CHM}.

\bibliographystyle{plain}
\bibliography{main}
\end{document}